\documentclass[12pt,letterpaper]{article}
\usepackage[latin1]{inputenc}
\usepackage{hyperref}
\usepackage{amsmath}
\usepackage{url}
\usepackage{amsfonts}
\usepackage{amssymb}
\usepackage{amsthm}
\usepackage{verbatim}
\usepackage{graphicx}
\usepackage{xcolor}
\usepackage{tikz,tkz-graph}

\usepackage{latexsym}
\addtolength{\hoffset}{-.8cm} \addtolength{\textwidth}{1.6cm}
\addtolength{\voffset}{-1.5cm}\addtolength{\textheight}{3.0cm}
\title{Modified Linear Programming and\\ Class 0 Bounds for Graph Pebbling}
\author{Daniel W. Cranston\footnote{Department of Mathematics and Applied
Mathematics, Virginia Commonwealth University, Richmond, VA, 23284,
\texttt{dcranston@vcu.edu}; This author is partially supported by NSA Grant 98230-15-1-0013.} ~ Luke Postle\footnote{Department of Combinatorics
and Optimization, University of Waterloo, Waterloo, Ontario, Canada, N2L 3G1,
\texttt{lpostle@uwaterloo.ca}} ~ Chenxiao Xue \footnote{ Department of
Mathematics and Computer Science, Davidson College, Davidson, NC 28035,
\texttt{chxue@davidson.edu}} ~ Carl Yerger\footnote{Department of Mathematics,
Davidson College, Davidson, NC 28035, \texttt{cayerger@davidson.edu} \newline
\indent Keywords: pebbling, linear programming, Lemke}}
\newtheorem{thm}{Theorem}
\newtheorem{prop}{Proposition}
\newtheorem{lem}{Lemma}
\newtheorem{claim}{Claim}
\newtheorem{cor}{Corollary}
\newtheorem{obs}{Observation}
\theoremstyle{definition}
\newtheorem{example}{Example}

\newcommand{\norm}[1]{\| #1 \|}

 \DeclareMathOperator{\ch}{ch}

\begin{document}
\def\ep{\varepsilon}
\def\lr{\left(}
\def\lf{\lfloor}
\def\rf{\rfloor}
\newcommand\floor[1]{\lfloor #1 \rfloor}
\newcommand\Floor[1]{\left\lfloor #1 \right\rfloor}
\def\lc{\left\{}
\def\rc{\right\}}
\def\rr{\right)}
\def\etapn{{2^{n+1} \left({\frac{1-8^{-(k_n+1)}}{7}}\right)+\floor{\frac{\alpha_n}{2}}}}
\def\letapn{{2^{n+1} \left({(1-8^{-(k_n+1)})}/7\right)+\floor{\frac{\alpha_n}{2}}}}
\def\p{\mathbb P}
\def\v{\mathbb V}
\def\e{\mathbb E}
\def\l{\mathbb L}
\def\lg{{\rm lg}}

\newcommand{\ignore}[1]{}

\providecommand{\abs}[1]{\vert#1\vert}
\providecommand{\norm}[1]{\Vert#1\Vert}
\providecommand{\flooralpha}{\left\lfloor\frac{\alpha_n}{2}\right\rfloor}
\providecommand{\floor}[1]{\left\lfloor#1\right\rfloor} 

\renewcommand\Box{\scalebox{0.7}{\mbox{$\square$}}}

\maketitle
\begin{abstract}
Given a configuration of pebbles on the vertices of a connected
graph $G$, a \textit{pebbling move} removes two
pebbles from some vertex and places one pebble on an
adjacent vertex. The \textit{pebbling number} of a graph $G$ is the
smallest integer $k$ such that for each vertex $v$ and each
configuration of $k$ pebbles on $G$ there is a sequence of pebbling
moves that places at least one pebble on $v$.

First, we improve on
results of Hurlbert, who introduced a linear optimization technique
for graph pebbling. In particular, we use a different set of
weight functions, based on graphs more general than trees.  
We apply this new idea to some graphs from
Hurlbert's paper to give improved bounds on their pebbling numbers.

Second, we investigate the structure of Class 0 graphs with few edges.
We show that every $n$-vertex Class 0 graph has at least
$\frac53n - \frac{11}3$ edges. This disproves a conjecture of Blasiak et al.
For diameter 2 graphs, we strengthen this lower bound to $2n - 5$, which is best
possible.  Further, we characterize the graphs where the bound holds with
equality and extend the argument to obtain an identical bound for diameter 2
graphs with no cut-vertex. 
\end{abstract}

\section{Introduction}

Graph pebbling was introduced by Chung 
in 1989.
Following a suggestion of Lagarias and Saks, she computed the pebbling number
of Cartesian products of paths to give a combinatorial proof of the following
number-theoretic result of Kleitman and Lemke.
\begin{thm} \cite{chung,lemke}
Let $\mathbb{Z}_n$ be the cyclic group on $n$ elements and let $|g|$
denote the order of a group element $g \in \mathbb{Z}_n$.  For every
sequence $g_1, g_2, \ldots, g_n$ of (not necessarily distinct)
elements of $\mathbb{Z}_n$, there exists a zero-sum subsequence
$(g_k)_{k \in K}$, such that $\sum_{k \in K} \frac{1}{|g_k|} \leq
1$.  Here $K$ is the set of indices of the elements in the
subsequence.
\label{thm:kleitman}
\end{thm}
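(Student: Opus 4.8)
The plan is to follow Chung's combinatorial route, recasting the zero-sum question as a pebbling problem on the divisor lattice of $n$. Write $n=p_1^{e_1}\cdots p_r^{e_r}$, and let $G$ be the graph whose vertices are the divisors of $n$, with $d$ adjacent to $dp_j$ whenever $dp_j\mid n$; this $G$ is exactly the Cartesian product of the paths $P_{e_1+1},\dots,P_{e_r+1}$, where the $j$-th coordinate of a vertex records the exponent of $p_j$. For each term $g_i$ of the sequence I would place one pebble on the vertex $|g_i|$ (a divisor of $n$), using $n$ pebbles in all. Since, by \cite{chung}, the pebbling number of such a product of paths is $2^{e_1+\cdots+e_r}\le p_1^{e_1}\cdots p_r^{e_r}=n$, there is a sequence of pebbling moves bringing a pebble to the vertex $1$; moreover on a product of paths a solution reaching a corner can be taken \emph{monotone}, i.e.\ every move decreases some coordinate, so its history tree is ``levelled'': a node at tree-depth $k$ sits at a divisor having exactly $k$ prime factors counted with multiplicity.

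Next I would record the bookkeeping that makes the pebbling moves respect the group structure. Attach to each pebble, sitting at a divisor $d$, a nonempty sub-collection $A\subseteq\{1,\dots,n\}$ of indices whose sum $\sum_{i\in A}g_i$ has order dividing $d$, keeping the sub-collections of distinct pebbles disjoint; initially the pebble at $|g_i|$ carries $\{i\}$. A move merging two pebbles at $d$, carrying $A$ and $B$, into one pebble at $d/p_j$ should output a sub-collection $C\subseteq A\cup B$ whose sum has order dividing $d/p_j$. Writing $\sum_{i\in A}g_i=(n/d)a$ and $\sum_{i\in B}g_i=(n/d)b$, one of these two sums or their sum does the job as soon as one of $a,b,a+b$ is divisible by $p_j$; this is automatic when $p_j=2$, so when $n$ is a power of $2$ the argument already closes. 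The final pebble then reaches the vertex $1$, so its sub-collection $C^{\ast}$ has zero sum, is nonempty, and is contained in the set of leaves of the history tree.

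The weight bound is a Kraft-type identity. Give each node of the monotone history tree the weight $1/d$, where $d$ is the divisor it occupies; a merge creating a node at $d/p_j$ out of nodes at $d$ combines the weights so that the parent's weight equals the sum of the children's weights, so induction from the root (at divisor $1$, weight $1$) shows the leaf weights sum to exactly $1$. A leaf coming from the term $g_i$ sits at the divisor $|g_i|$, hence carries weight $1/|g_i|$; since $C^{\ast}$ is a subset of the leaves, $\sum_{k\in C^{\ast}}1/|g_k|\le 1$, while $\sum_{k\in C^{\ast}}g_k=0$, which is the assertion (and if some $g_i=0$ one simply takes $K=\{i\}$).

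The main obstacle is precisely the merge step for an odd prime $p_j$: a binary merge of two pebbles need not manufacture an element of the required smaller order, because $a,b,a+b$ can all be prime to $p_j$. I would resolve this by using, in the $j$-th coordinate, a $p_j$-ary rule — $p_j$ pebbles at $d$ yield one at $d/p_j$, with the needed sub-collection extracted by a pigeonhole argument on the partial sums of the $p_j$ carried elements — and by invoking the corresponding mixed-radix refinement of \cite{chung}, under which the pebbling number of $G$ is exactly $n$; the weighted Kraft identity survives verbatim since $p_j$ children of weight $1/d$ again sum to the parent weight $p_j/d$. As a fallback I would keep the original Kleitman--Lemke induction on the number of prime factors of $n$, passing to the quotient by a subgroup of prime order and lifting a light zero-sum subsequence; there the delicate point is the same one, namely arranging the contracted blocks so the weight charged back in $\mathbb{Z}_n$ never exceeds the weight of the contracted element in the quotient.
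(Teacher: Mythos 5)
This theorem is stated in the paper only as quoted background, with a citation to \cite{chung} and \cite{lemke}; the paper contains no proof of it, so there is nothing internal to compare your argument against. Judged on its own terms, your outline is the standard Chung reduction and its skeleton is sound: placing a pebble at the divisor $|g_i|$ in the divisor lattice (a product of paths), attaching disjoint index sets whose sums have order dividing the current divisor, and reading the bound $\sum_{k\in K}1/|g_k|\le 1$ off a Kraft-type identity on the history tree is exactly how the combinatorial proof goes. You also correctly locate the one genuine obstruction --- a binary merge at an odd prime $p_j$ need not produce a sum of smaller order, since $a$, $b$, and $a+b$ can all be prime to $p_j$ --- and the correct repair, namely $p_j$-ary moves in the $j$-th coordinate together with the pigeonhole on partial sums of $p_j$ carried elements.

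That said, as written this is an outline rather than a proof, because the two load-bearing facts are asserted or outsourced. First, the claim that the $(p_1,\dots,p_r)$-ary pebbling number of $P_{e_1+1}\Box\cdots\Box P_{e_r+1}$ equals $p_1^{e_1}\cdots p_r^{e_r}=n$ is the entire content of Chung's theorem; your first paragraph's appeal to the ordinary (binary) pebbling number $2^{e_1+\cdots+e_r}\le n$ is ultimately a red herring, since those moves are not group-compatible and the argument must be run from the start in the $q$-pebbling model. Second, the ``levelled'' history tree (every node at tree-depth $k$ sits at a divisor with exactly $k$ prime factors, so a leaf from $g_i$ sits at $|g_i|$ and carries weight exactly $1/|g_i|$) requires that the solution be monotone toward the target corner; this is true for products of paths but is not a general feature of pebbling solutions (the Lemke graph discussed in this very paper is the standard example where greedy strategies fail), and it is cleanest to obtain it by building the weight bound into Chung's coordinate-by-coordinate induction rather than extracting it afterward from an arbitrary solution. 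With those two ingredients supplied --- either by proving the $q$-pebbling theorem or by explicitly citing it in the form that carries the weight bound --- your argument closes correctly, including the degenerate case $g_i=0$.
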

Chung developed the pebbling game to give a more natural proof of
this theorem. Results of this type are important in this
area of number theory, as they generalize zero-sum theorems such as
the Erd\H{o}s-Ginzburg-Ziv theorem \cite{egz}. Over the past two decades,
pebbling has developed into its own subfield \cite{hurlbert,glennsurvey2},
with over 80 papers.

We consider a connected graph $G$ with {\it pebbles}
(indistinguishable markers) on some of its vertices.
More precisely, a \emph{configuration} $p$ on a
graph $G$ is a function from $V(G)$ to $\mathbb{N} \cup \{0\}$.
The \emph{size of $p$}, denoted $|p|$, is $\sum_{v\in V(G)}p(v)$.  A
{\it pebbling move} removes two pebbles from some vertex and places one pebble
on an adjacent vertex.  A \emph{rooted graph} is a pair $(G,r)$ where
$G$ is a graph and $r \in V(G)$ is the \emph{root vertex}.  A
pebbling configuration $p$ is \emph{solvable} for a rooted graph $(G,
r)$ if some configuration $p'$ has at least one pebble on $r$, and $p'$ can
be obtained from $p$ by a sequence of pebbling moves.
Otherwise, $p$ is \emph{unsolvable} (or \textit{r-unsolvable}, when the root
$r$ is specified.)

The {\it pebbling number} $\pi(G)$ is the least integer
$k$ such that, for any vertex $v\in V(G)$ and any initial
configuration $p$ of $k$ pebbles, $p$ is solvable for $(G,v)$.
Likewise $\pi(G,r)$ is the pebbling number
of $G$, when the root vertex must be $r$.
A trivial lower bound for $\pi(G)$ is $|V(G)|$: for some root $r$, we
place one pebble on each vertex other than $r$, for a total of $|V(G)|-1$ pebbles,
but we cannot reach $r$.

The path, $P_n$, on $n$ vertices has $\pi(P_n)=2^{n-1}$.
More generally, if graph $G$ has diameter $d$, then $\pi(G)\ge 2^d$.
Let $f(n,d)$ denote the maximum pebbling number of an $n$-vertex
graph with diameter $d$.  Pachter, Snevily, and Voxman
\cite{pachter} proved that $f(n,2) = n+1$, and Clarke, Hochberg, and
Hurlbert \cite{clarke} classified all graphs $G$ of diameter 2 with
$\pi(G)=n+1$.  Bukh \cite{bukh} proved
that $f(n,3) = 3n/2 + O(1)$, and Postle, Streib, and Yerger \cite{diam34} strenthened
Bukh's result, proving the exact bound $f(n,3) = \floor{3n/2} + 2$.
They also gave~\cite{diam34} an asymptotic bound for $f(n,4)$.

Section~\ref{sec:prelims} gives some necessary preliminaries; in particular 
it describes a
technique of Hurlbert \cite{hurllinprog} based on linear programming.  In Section
\ref{sec:weights}, we improve Hurlbert's method by using a different set of
weight functions, based on graphs more general than trees.  We apply this new
idea to some graphs from his paper to give improved bounds on their pebbling
numbers.

In Section~\ref{sec:class0}, we investigate the structure of Class 0 graphs
(graphs $G$ with $\pi(G)=|V(G)|$) with few edges.  We show that every
$n$-vertex Class 0 graph has at least $\frac53n - \frac{11}3$ edges.
This disproves a conjecture of Blasiak et al~\cite{blasiak}.
For diameter 2 graphs, we strengthen this bound to $2n - 5$ edges, which is
best possible.  We characterize the graphs where it holds with equality and
extend the argument to obtain an identical bound for diameter 2 graphs with
no cut-vertex.  

\section{Linear Programming Preliminaries}
\label{sec:prelims}

Computing a graph's pebbling number is hard.
Watson \cite{watson} and Clark and Milans \cite{milans}
studied the complexity of graph pebbling and some of its
variants, including optimal pebbling and cover pebbling.
Watson showed that it is NP-complete to determine whether a given configuration
is solvable for a given rooted graph $(G,r)$.
Clark and Milans refined this result, showing that deciding whether
$\pi(G) \leq k$ is $\Pi_2^P$-complete; this means
that pebbling is in the class of problems computable in polynomial
time by a co-NP machine equipped with an oracle for an NP-Complete
language.

Hurlbert \cite{hurllinprog} introduced a new linear programming technique, in
hopes of more efficiently computing bounds on pebbling numbers.
Before we describe our improvements on it, we briefly explain his method. Let
$G$ be a graph and let $T$ be a subtree of $G$ rooted at $r$.  For each $v\in
V(T)-r$, let $v^+$ be the parent of $v$, the neighbor of $v$ in $T$ that is
closer to $r$.  A \textit{tree strategy} is a tree $T$ and an associated
nonnegative \textit{weight function} $w_T$ (or $w$ if the context is
clear) where $w(r) = 0$ and $w(v^+) = 2w(v)$ for every vertex
not adjacent to $r$.  Further, $w(v) = 0$ if $v \not \in V(T)$.
Let $\mathbf{1}_G$ be the vector on $V(G)$ in which every entry is 1.

Hurlbert \cite{hurllinprog} proposed a general method for defining such a weight
function through tree strategies.  He proved the following
result (here $\cdot$ denotes dot product).

\begin{lem} \label{lem:wtfunction}  Let $T$ be a tree strategy of $G$ rooted at $r$,
with associated weight function $w$.  If $p$ is an $r$-unsolvable
configuration of pebbles on $V(G)$, then $w \cdot p \leq w \cdot
\mathbf{1}_G$. \end{lem}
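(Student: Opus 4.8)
The plan is to argue by contradiction: suppose $p$ is $r$-unsolvable but $w \cdot p > w \cdot \mathbf{1}_G$. The key idea is that the quantity $w \cdot p$ acts as a potential function that cannot increase under pebbling moves, combined with the observation that if every vertex carried a large enough share of pebbles we could reach $r$. First I would establish the monotonicity claim: if configuration $p'$ is obtained from $p$ by a single pebbling move along an edge $uv$ (removing two pebbles from $u$, adding one to $v$), then $w \cdot p' = w \cdot p - 2w(u) + w(v)$. When the move is "toward the root" along the tree $T$ (i.e.\ $u = v^+$ is... wait, the move sends a pebble from $v$ to $v^+$), we have $w(v^+) = 2w(v)$ exactly when $v$ is not adjacent to $r$, so the potential is unchanged; when $v$ is adjacent to $r$, the move sends a pebble onto $r$ where $w(r)=0$, so the potential drops by $2w(v) \le 0$ change is $-2w(v) + 0 \le 0$. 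For moves not directed along $T$ toward $r$, or moves off the tree, one checks $-2w(u) + w(v) \le 0$ as well, since $w(v) \le 2w(u)$ fails in general — so this is the delicate point (see below). Granting monotonicity, $w \cdot p$ never increases, so it suffices to derive a contradiction from $w \cdot p > w \cdot \mathbf{1}_G$ directly.

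The cleaner route, which I would actually pursue, avoids analyzing arbitrary move sequences. Instead, I would show the contrapositive: if $w \cdot p > w \cdot \mathbf{1}_G = \sum_{v \in V(T)} w(v)$, then $p$ is $r$-solvable. The argument is a greedy/inductive collapse of the tree $T$ from the leaves inward. Root $T$ at $r$ and process vertices in order of decreasing distance from $r$. At a leaf $v$ of $T$ (within the current subtree), if $p(v) \ge 2$, move $\lfloor p(v)/2 \rfloor$ pebbles to $v^+$; this contributes $w(v)\cdot 2\lfloor p(v)/2\rfloor \ge w(v)(p(v)-1)$ worth of "weight" toward the parent, losing at most $w(v)$ in the inequality bookkeeping (the loss corresponding to the "$-1$", i.e.\ to $w \cdot \mathbf 1$). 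The point is that the total deficit accumulated over all vertices of $T$ is at most $\sum_{v \in V(T)} w(v) = w \cdot \mathbf{1}_G$, so if $w\cdot p$ strictly exceeds this, a positive amount of weight survives at $r$; since $w(r) = 0$... hmm, that's zero. So instead the correct accounting is: track $\sum_v w(v) p'(v)$ after collapsing to the current frontier; it stays $> \sum w(v)$, which forces the frontier to eventually include a neighbor $v$ of $r$ with enough pebbles, namely $p'(v) \ge 2$, enabling the final move onto $r$.

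The main obstacle is precisely the monotonicity step for moves that are \emph{not} oriented toward $r$ along $T$, or that involve vertices outside $T$ (where $w \equiv 0$). For a move off the tree the potential change is $-2w(u) + w(v)$ with $u \notin V(T)$, so $w(u) = 0$ and $w(v)$ could be positive — the potential could \emph{increase}. This is why one cannot simply argue "potential is monotone under all moves"; the honest statement is that $p$ unsolvable implies a bound, and the proof should fix an optimal solving sequence (if one existed) and show it can be taken to use only tree-directed moves, or else argue directly via the greedy collapse that solvability follows from violating the inequality. I would resolve this by taking the contrapositive direction (solvability from $w\cdot p > w\cdot\mathbf 1_G$) as the primary claim, since there one gets to \emph{choose} the moves and can restrict entirely to edges of $T$ directed toward $r$, sidestepping the problematic move types entirely. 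The bookkeeping that the total ``rounding loss'' telescopes to exactly $w \cdot \mathbf{1}_G$ is then a routine induction on $|V(T)|$, peeling off one leaf at a time and using $w(v^+) = 2w(v)$ to convert two pebbles at $v$ into one pebble at $v^+$ carrying the same weight.
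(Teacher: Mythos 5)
Your final argument --- proving the contrapositive by moving pebbles only along edges of $T$ toward the root, using $w(v^+)=2w(v)$ to preserve the weighted sum until some neighbor of $r$ accumulates two pebbles --- is correct and is essentially the paper's proof, which simply iterates ``some positively-weighted vertex has two pebbles, so move one toward $r$'' (termination being automatic since each move reduces the total pebble count). Your opening worry about moves that are not tree-directed is moot for exactly the reason you identify: in the contrapositive you get to choose the moves, so the paper never needs a monotonicity claim for arbitrary moves.
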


The proof idea is easy.  Suppose that $p$ is a configuration with $w\cdot p >
w\cdot \mathbf{1}_G$.  This implies that some vertex $v$ in $T$ has at least
two pebbles.  Now we make a pebbling move from $v$ toward the root, i.e., from
$v$ to $v^+$, to get a new configuration $p'$.  Since $w(v^+)=2w(v)$, we have
$w\cdot p' = w\cdot p > w\cdot \mathbf{1}_G$.  By repeating this process, we
can eventually move a pebble to the root, $r$.

Since every $r$-unsolvable pebbling configuration $p$ satisfies $w \cdot p
\leq w \cdot \mathbf{1}_G$,
it follows that $\pi(G,r)$ is bounded above by one plus the number of pebbles
in the largest configuration $p$ such that $w\cdot p\le w\cdot \mathbf{1}_G$.
Let $\mathcal{T}_r$ be the set of all tree strategies in $G$ associated with
root vertex $r$.
By applying Lemma~\ref{lem:wtfunction} to all of $\mathcal{T}_r$ simultaneously,
we arrive at the following integer linear program: 

$$\begin{array}{rl}
   \max     & \sum_{v \neq r} p(v) \\
    \mbox{s.t.} & w \cdot p \leq w \cdot \mathbf{1}_G \\
                &  \mbox{for all }T \in \mathcal{T}_r.
    \end{array}
$$

Let $z_{G,r}$ be the optimal value of this integer linear program and let
$\hat{z}_{G,r}$ be the optimum of the linear relaxation, so that
configurations can be rational.  Since $z_{G,r}\le \floor{\hat{z}_{G,r}}$, we get the
bound $\pi(G,r) \leq z_{G,r} + 1\le \floor{\hat{z}_{G,r}}+1$.
Let $w_1,\ldots,w_k$ be weight functions of tree strategies for trees (possibly
different) rooted at $r$, and let $w'$ be a convex combination of
$w_1,\ldots,w_k$.  If $p$ is an $r$-unsolvable configuration, then $w'\cdot p\le
w'\cdot \mathbf{1}_G$ (otherwise $w_i\cdot p > w_i\cdot\mathbf{1}_G$, for some
$i$, a contradiction).  Further, if $w'(v)\ge 1$ for all $v$, then $|p|\le
\sum_{v\ne r}\floor{w'(v)}p(v)\le\floor{w'}\cdot\mathbf{1}_G$.  For ease of
application, we state this observation in a slightly more general form. 
We call this the Covering Lemma.

\begin{lem}[Covering Lemma]
For a graph $G$ and a root $r\in V(G)$, let $w'$ be a convex combination of tree
strategies for $r$, and let $C$ and $M$ be positive constants.  If $w'(v)\ge C$
for all $v\in V(G)\setminus\{r\}$ and 
$\sum_{v\in V(G)\setminus \{r\}}w'(v)< M$, then $\pi(G,r)\le
\Floor{\frac{M}C}+1$.
In particular, if
$\sum_{v\in V(G)\setminus \{r\}}w'(v)< C|V(G)|$, then
$\pi(G,r)=|V(G)|$.  \label{coveringLemma}
\end{lem}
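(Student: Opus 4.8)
The plan is to derive the Covering Lemma directly from the two inequalities sketched in the paragraph immediately preceding the statement, namely (i) that a convex combination $w'$ of tree-strategy weight functions still satisfies $w'\cdot p\le w'\cdot\mathbf 1_G$ for every $r$-unsolvable $p$, and (ii) that when $w'(v)\ge C$ on all of $V(G)\setminus\{r\}$ one can bound $|p|$ from above using $w'$. Since these are exactly the observations Hurlbert's Lemma~\ref{lem:wtfunction} gives us (once extended to convex combinations), I would treat them as established and focus on the arithmetic bookkeeping with the constants $C$ and $M$.

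\medskip

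First I would fix an $r$-unsolvable configuration $p$ and argue, via Lemma~\ref{lem:wtfunction} applied to each tree strategy in the convex combination, that $w'\cdot p\le w'\cdot\mathbf 1_G$; this is the inequality that is proved by the ``push a pebble toward the root'' argument in the text, extended to a convex combination by the observation that if $w'\cdot p > w'\cdot\mathbf 1_G$ then $w_i\cdot p>w_i\cdot\mathbf 1_G$ for some $i$. Next, using $w'(r)=0$ (so $r$ contributes nothing to either side) together with the hypothesis $\sum_{v\ne r}w'(v)<M$, I get $w'\cdot\mathbf 1_G=\sum_{v\ne r}w'(v)<M$. Combining, $w'\cdot p<M$. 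Then, since $w'(v)\ge C$ for every $v\ne r$, we have $C|p|=C\sum_{v\ne r}p(v)\le\sum_{v\ne r}w'(v)p(v)=w'\cdot p<M$, hence $|p|<M/C$, and because $|p|$ is an integer, $|p|\le\Floor{\frac MC}$ unless $M/C$ is itself an integer, in which case $|p|\le \frac MC-1\le \Floor{\frac MC}$ still holds after adjusting; in any case every $r$-unsolvable configuration has at most $\Floor{\frac MC}$ pebbles, so $\pi(G,r)\le\Floor{\frac MC}+1$. For the ``in particular'' clause, I would simply set $M=C|V(G)|$: the hypothesis becomes $\sum_{v\ne r}w'(v)<C|V(G)|$, giving $\pi(G,r)\le\Floor{|V(G)|}+1$; but then I must sharpen the strict inequality to conclude $\pi(G,r)\le|V(G)|$ rather than $|V(G)|+1$, using that the bound $|p|<M/C=|V(G)|$ forces $|p|\le|V(G)|-1$, hence $\pi(G,r)\le|V(G)|$, and combined with the trivial lower bound $\pi(G,r)\ge|V(G)|$ this gives equality.

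\medskip

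The only genuinely delicate point — the ``main obstacle,'' though it is minor — is handling the boundary case where $M/C$ is an integer, so that a naive floor would lose nothing but the strict inequality must be invoked to keep the bound at $\Floor{M/C}$ rather than $\Floor{M/C}+1 = M/C+1$. I would resolve this once and for all by observing that $|p|$ is a nonnegative integer strictly less than $M/C$, so $|p|\le\lceil M/C\rceil-1\le\Floor{M/C}$, which is valid whether or not $M/C\in\Z$. The rest is routine: the key structural input (convex combinations of tree strategies still yield valid inequalities, and integrality of $w'$ is not needed because we floor the configuration sizes rather than the weights) is already supplied by the preceding discussion, so the proof is essentially a two-line chain of inequalities followed by this rounding remark and, for the second statement, a comparison with the trivial lower bound $\pi(G,r)\ge|V(G)|$.
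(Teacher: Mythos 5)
Your proof is correct and takes essentially the same route the paper intends: the lemma is stated as a repackaging of the observation in the paragraph immediately preceding it, and your chain $C\,|p|\le w'\cdot p\le w'\cdot\mathbf{1}_G<M$ for an $r$-unsolvable $p$ (using $w'(r)=0$ so that $p(r)=0$ may be assumed), followed by the rounding remark $|p|\le\lceil M/C\rceil-1\le\Floor{M/C}$, supplies exactly the details the paper leaves implicit. The specialization $M=C|V(G)|$ together with the trivial lower bound $\pi(G,r)\ge|V(G)|$ for the second statement is likewise the intended argument.
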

\noindent
For any bound on $\pi(G)$ arising from such a $w'$, 
a \emph{certificate} of the
bound consists of 
the strategies $w_i$ and their coefficents in the
convex combination forming $w'$.


Hurlbert applies this linear programming method more broadly by
considering strategies on trees where $w(v^+) \geq 2w(v)$,
called \textit{nonbasic strategies}.
Since nonbasic strategies are conic combinations of basic strategies
\cite[Lemma 5]{hurllinprog}, this extension does not strengthen the
method.  However, it often yields simpler certificates.

\section{More General Weight Functions}
\label{sec:weights}

Here we generalize the notion of weight function from
the previous section to allow
weight functions for graphs $G$ that are not trees.
A \emph{weight function} is a map $w: V(G)\rightarrow
\mathbf{R}^+\cup\{0\}$.  A weight function for a graph $G$ and root $r$ is
\emph{valid} if $w(r)=0$ and every $r$-unsolvable configuration $p$ satisfies
$w\cdot p\le w\cdot\mathbf{1}_G$.
Although it is harder to show that one of these more general weight
functions is valid,
when we can, this often leads to improved pebbling
bounds for a variety of graph families.
Given a graph $G$ and a root $r$, it is straightforward to check that the
theory developed in the previous section extends to any weight function $w$
such that $w\cdot p \le w\cdot \mathbf{1}_G$ for every configuration $p$ that
is not $r$-solvable.  Our next result establishes a new family of such weight
functions.  A \emph{$k$-vertex} is a vertex of degree $k$.

\begin{lem}
Form $G$ from an even cycle $C_{2t}$ by identifying one vertex with the
endpoint of a path of length $s-t$.  Let $x_t$ be the resulting 3-vertex and $x_0$
be the 2-vertex farthest from $x_t$; now $x_0$ and $x_t$ split the even cycle
into two paths, $P_1$ and $P_2$.  Label the internal vertices of $P_1$ as $x'_1,
x'_2, \ldots, x'_{t-2},x'_{t-1}$ and the internal vertices of $P_2$ as $x''_1,
x''_2, \ldots, x''_{t-2}, x''_{t-1}$.  Call the 1-vertex $r$, and let $P_3$ be
the path from $x_t$ to $r$.
Label the internal vertices of $P_3$ as $x_{t+1}, x_{t+2},\ldots$, $x_{s-1}, x_s$.
For each $i\ne 0$, give weight $2^i$ to vertex $x_i$ or vertices $x'_i$ and
$x''_i$.  Let $\alpha = \frac{2^s+2^{t-1}-2}{2^s-1}$ and give weight $\alpha$ to
$x_0$.  Fix some order on the vertices, and let $w$ be the vector of length
$|V(G)|$ where entry $i$ is the weight of vertex $i$.  If $p$ is an
$r$-unsolvable configuration, then $w\cdot p \le w\cdot \mathbf{1}_G$.
\label{lem:cycleTail}
\end{lem}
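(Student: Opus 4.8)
The plan is to show that if $p$ is a configuration with $w\cdot p > w\cdot\mathbf 1_G$, then $p$ is $r$-solvable, by an inductive ``weight-pushing'' argument in the spirit of Lemma~\ref{lem:wtfunction}, but adapted to the extra flexibility created by the cycle. The key structural feature is that the graph $G$ consists of a tail $P_3$ (a genuine tree strategy toward $r$) attached at the $3$-vertex $x_t$ to an even cycle $C_{2t}$, and on the cycle each ``level'' $i$ (for $1\le i\le t-1$) has two vertices $x'_i,x''_i$ of equal weight $2^i$, with the single vertex $x_0$ of weight $\alpha$ at the bottom. First I would record the basic move-invariance facts: a pebbling move along $P_3$ toward $r$, or along either $P_j$ ($j=1,2$) from level $i$ to level $i+1$, preserves $w\cdot p$ (since weights double), while a move that would decrease a weight can simply be avoided. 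So it suffices to show that from any $p$ with $w\cdot p > w\cdot\mathbf 1_G$ we can always make a weight-preserving move toward $x_t$ (and thence toward $r$), i.e. that it is impossible to be ``stuck'' with large weight.

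Next I would set up the potential/counting argument. Write $W := w\cdot\mathbf 1_G$; one computes $W = \alpha + 2\sum_{i=1}^{t-1}2^i + \sum_{i=t}^{s}2^i$, and the precise choice $\alpha=\frac{2^s+2^{t-1}-2}{2^s-1}$ is exactly what is needed to make a certain boundary inequality tight. Assume for contradiction $p$ is $r$-unsolvable with $w\cdot p > W$. Being unsolvable, in particular no sequence of moves reaches $r$, so along the tail $P_3$ the configuration is ``starved'': each $x_i$ ($t\le i\le s$) has at most $1$ pebble after we exhaustively push everything we can toward $x_t$. The interesting part is the cycle: here I would argue that we may route pebbles from the two arms $P_1$ and $P_2$ toward $x_t$, and that the bottom vertex $x_0$ acts as a ``junction'' where pebbles from the two sides can be combined — two pebbles at the same level $i$ on the \emph{same} arm give one at level $i+1$, but also a pebble on $x'_i$ together with a pebble on $x''_i$ can, after being pushed down to $x_0$ from both sides, be combined there if $i$ is small enough; this is why $x_0$ gets the special fractional weight $\alpha>2$ rather than $2^0=1$. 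I would make this precise by defining, for an unsolvable $p$, the maximal ``collapsed'' configuration obtained by pushing everything toward $x_t$ along both arms and along the tail, and then bounding its weight: each level $1\le i\le t-1$ contributes at most $w$-weight $2^i+2^i$ minus a correction because at most one pebble can survive at $x_0$, each tail vertex contributes at most $2^i$, and $x_t$ itself at most $2^t$ (else we push to $x_{t+1}$). Summing these maxima and using the definition of $\alpha$ gives exactly $w\cdot p\le W$, the desired contradiction.

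The main obstacle — and the place where genuine care is required — is the cycle-combining step: proving that if $p$ is $r$-unsolvable then one cannot have, simultaneously, ``too many'' pebbles trapped on both arms of the cycle in a way that the weight function overcounts. The subtlety is that a pebble on $x'_i$ and a pebble on $x''_i$ are worth $2\cdot 2^i$ to $w\cdot p$ but may only be worth one pebble's worth of ``solving power'' at $x_t$ (you cannot add $1+1$ at different vertices), so the argument must show that whenever $w\cdot p>W$ there is always a legal move that strictly helps — equivalently, that the worst stuck configuration has weight exactly $W$. I would handle this by a careful case analysis on where the ``excess'' weight sits (on the tail, on one arm, or split across both arms at a common lowest level), in each case exhibiting an explicit move toward $x_t$ that preserves $w\cdot p$, then iterating; the choice of $\alpha$ is dictated by forcing the split-across-both-arms case to balance. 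A clean alternative, which I would try first, is to exhibit $w$ as a convex (or conic) combination of ordinary tree strategies rooted at $r$ — two such trees, one using arm $P_1\cup P_3$ and one using arm $P_2\cup P_3$ — and then invoke Lemma~\ref{lem:wtfunction} together with the Covering-Lemma-style averaging from Section~\ref{sec:prelims}; if the weights $2^i$ on shared vertices and $\alpha$ on $x_0$ can be realized this way, the lemma follows immediately and the fractional value of $\alpha$ is explained as the average of $1$ (its weight in trees not containing it at the bottom) with larger powers.
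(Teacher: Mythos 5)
Your proposal correctly locates the difficulty (the vertex $x_0$ is over\-weighted relative to any tree strategy, and the value of $\alpha$ must be exactly what makes the extremal stuck configuration balance), but neither of the two concrete routes you offer survives scrutiny. The ``clean alternative'' you say you would try first --- writing $w$ as a conic combination of tree strategies and invoking Lemma~\ref{lem:wtfunction} --- provably cannot work. In any tree strategy rooted at $r$, the tree path from $x_0$ to $r$ has length exactly $s+1$ (it must traverse one full arm and the whole tail), so $w_T(x_0)=2^{-s}w_T(y)$ where $y$ is the unique neighbor of $r$; hence every conic combination of tree strategies satisfies $2^s\,w(x_0)\le w(y)$. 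The target weight function has $w(y)=2^s$ and $w(x_0)=\alpha>1$, violating this. This is not incidental: the entire point of Lemma~\ref{lem:cycleTail} is to produce valid weight functions \emph{outside} the cone of tree strategies (otherwise the paper's applications to $Q_3$ and the Lemke graph, which Hurlbert's tree-strategy LP cannot handle, would be impossible).

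Your main route also has a false step at its core: ``it suffices to show that from any $p$ with $w\cdot p>w\cdot\mathbf 1_G$ we can always make a weight-preserving move toward $x_t$.'' No such move need exist. Take $p$ with $2^{s+1}$ pebbles on $x_0$ and none elsewhere: its weight is $2^{s+1}\alpha>w\cdot\mathbf 1_G$ (by the choice of $\alpha$, the configuration with $2^{s+1}-1$ pebbles on $x_0$ has weight \emph{exactly} $w\cdot\mathbf 1_G$), yet the only legal moves go from $x_0$ to $x'_1$ or $x''_1$ and each strictly decreases the weight by $2\alpha-2>0$. So the iteration you describe stalls precisely at the configurations that matter, and the subsequent ``collapsed configuration'' bound, which sums per-vertex maxima of one pebble each, does not apply to unsolvable configurations that pile many pebbles on $x_0$ or spread weight $2^t-1$ along an arm. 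What is actually needed --- and what the paper supplies --- is a quantitative accounting of the weight lost when leaving $x_0$: after reducing (via two preliminary claims) to the case where each arm together with $x_0$ at \emph{unit} weight carries at most $2^t-1$, i.e.\ $W_L+\alpha^{-1}W_0\le 2^t-1$ and $W_R+\alpha^{-1}W_0\le 2^t-1$, one adds these to the tail bound $W_C\le 2^{s+1}-2^t$ and exploits that $W_0$ is either $0$ or at least $\alpha$, so that $(2\alpha^{-1}-1)W_0\ge 2-\alpha$. Your sketch gestures at a ``careful case analysis'' here but never produces this mechanism, and without it the proof does not close.
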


\begin{proof}
Figures~\ref{fig:square} and~\ref{fig:bruhatweight} both show examples of this
lemma, which we apply later.

Let $p$ be an $r$-unsolvable configuration.  We will show that $w\cdot p\le
w\cdot\mathbf{1}_G$.
Let $M = \alpha + 2^{s+1}+2^t-4$.  Note that $w\cdot \mathbf{1}_G = M$.  
Let $W_0=\alpha p(x_0)$, 
$W_L=\sum_{v\in P_1\setminus\{x_0,x_t\}}w(v) p(v)$,
$W_R=\sum_{v\in P_2\setminus\{x_0,x_t\}}w(v) p(v)$,
and
$W_C=\sum_{v\in P_3\setminus\{r\}}w(v) p(v)$. (Here $L$, $R$, and $C$ stand for
left, right, and center.)
We will show that $W_0+W_L+W_R+W_C \le M$.

\begin{claim}
If $W_L=0$ or $W_R=0$, then the lemma is true.
\end{claim}

By symmetry, assume that $W_L=0$.  Now Lemma~\ref{lem:wtfunction} implies that
$\alpha^{-1}W_0+W_R+W_C \le 2^{S+1}-1$.  Multiplying by
$\alpha$ gives $W_0+W_L+W_R+W_C\le W_0+W_L+\alpha(W_R+W_C)\le \alpha (2^{s+1}-1)
= \alpha+\alpha(2^{s+1}-2) = \alpha+2(2^s+2^{t-1}-2)=\alpha+2^{s+1}+2^t-4=M$.
This proves the claim.

\begin{claim}
If $W_L+\alpha^{-1}W_0 > 2^t-1$, then the lemma is true.
\end{claim}

Suppose that $W_L+\alpha^{-1}W_0 > 2^t-1$.  
We can assume that $W_L<2^t$; otherwise we can move weight down to $x_t$,
without changing the sum $W_L+W_C$.  Now we move some pebbles toward the root
and reduce to the case in Claim~1.  Specifically, we remove $2^t-W_L$ pebbles from
$x_0$ and place half that many on $x'_1$.  Call the new configuration $p'$ and
define $W'_0$, $W'_L$, $W'_R$, and $W'_C$, analogously.
Note that $\alpha^{-1}W'_0+W'_L+W'_R+W'_C=\alpha^{-1}W_0+W_L+W_R+W_C$. 
Since $W'_L=2^t$, we can move all weight from internal vertices of $P_1$ to
$x_t$.  This gives a new configuration $p''$ with $W''_L=0$.
Again $\alpha^{-1}W''_0+W''_L+W''_R+W''_C=\alpha^{-1}W_0+W_L+W_R+W_C$. So the
claim holds by Claim~1.
\smallskip

By Claim~2, we now assume that $W_L+\alpha^{-1}W_0 \le 2^t-1$.
By symmetry, we assume that $W_R+\alpha^{-1}W_0 \le 2^t-1$.
By Lemma~\ref{lem:wtfunction}, we can also assume that $W_C\le 2^{s+1}-2^{t}$. 
Adding these inequalities yields 
\begin{align}
W_C+W_L+W_R+2\alpha^{-1}W_0 &\le 2(2^t-1)+2^{s+1}-2^t\nonumber\\
& =2^{s+1}+s^t-2.
\end{align}
We can assume that $W_0>0$, since otherwise the lemma holds by
Lemma~\ref{lem:wtfunction}.  Thus,
we have $W_0\ge \alpha$, so $(2\alpha^{-1}-1)W_0 \ge 2-\alpha$.  Subtracting
this inequality from (1) gives the desired result.
\end{proof}

The following observation extends our class of valid weight functions a bit
further.

\begin{obs}
Let $G$ be a graph and $r$ a root; let $w$ be a weight function on $G$ such that
$w\cdot p\le w\cdot \mathbf{1}_G$ for every $r$-unsolvable configuration $p$.
Form $G'$ from $G$ by adding a new vertex $u$ adjacent to some vertex $u^+$ of
$G$ (with $u^+\ne r$), and form $w'$ from $w$, where $w'(u)=\frac12w(u^+)$ and
$w'(v)=w(v)$ for every $v\in V(G)$.  For every $r$-unsolvable configuration $p'$
in $G'$, we have $w'\cdot p'\le w'\cdot\mathbf{1}_{G'}$.  Further, we can allow,
more generally, that $w'(u) \le \frac12w(u^+)$.  We can also attach trees,
rather than single vertices.
\label{obs1}
\end{obs}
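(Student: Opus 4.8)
The plan is to reduce the claim about $G'$ to the hypothesis on $G$ by a single ``unfolding'' step: given an $r$-unsolvable configuration $p'$ on $G'$, I will push all pebbles that sit on the new leaf $u$ onto its neighbor $u^+$, producing a configuration $p$ supported on $V(G)$, and then invoke the assumed inequality $w\cdot p\le w\cdot\mathbf 1_G$. First I would record the elementary fact that if $p'$ is $r$-unsolvable on $G'$, then the configuration $p$ obtained by replacing $p'(u)$ pebbles on $u$ by $\floor{p'(u)/2}$ pebbles on $u^+$ (and keeping everything else) is $r$-unsolvable on $G$: any solution for $(G,r)$ from $p$ lifts to a solution for $(G',r)$ from $p'$, since the leaf $u$ is useless except to send pebbles to $u^+$. (If $p'(u)$ is odd, the leftover single pebble on $u$ is simply discarded; this only helps.)

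Next I would do the weight bookkeeping. Write $p'(u)=2q+\rho$ with $\rho\in\{0,1\}$, so $p(u^+)=p'(u^+)+q$ and $p(v)=p'(v)$ for all other $v\in V(G)$. Using $w'(u^+)=w(u^+)=2\cdot\tfrac12 w(u^+)\ge 2w'(u)$ (more generally $w'(u)\le\tfrac12 w(u^+)$), I get
$$w'\cdot p' = w\cdot p + \bigl(w'(u)p'(u) - w(u^+)q\bigr) = w\cdot p + w'(u)\rho - \bigl(w(u^+)-2w'(u)\bigr)q \le w\cdot p,$$
since $w'(u)\rho\ge 0$ could in principle be positive — so here I should instead push only an \emph{even} number of pebbles and keep the at-most-one leftover pebble on $u$ in $p$ as well, i.e.\ define $p(u)=\rho$, $p(u^+)=p'(u^+)+q$. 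Then $w'\cdot p' = w\cdot p - (w(u^+)-2w'(u))q \le w\cdot p$ exactly, with $p$ still $r$-unsolvable on $G$ (one pebble on a leaf changes nothing) and $p$ supported on $V(G')=V(G)\cup\{u\}$; to apply the hypothesis cleanly I restrict attention to the $V(G)$-part, noting $w(u)$ is not defined on $G$, so in fact it is cleanest to take $\rho=0$ by first discarding any odd leftover pebble from $p'$ before unfolding. Finally, $w\cdot p\le w\cdot\mathbf 1_G = w'\cdot\mathbf 1_{G'} - w'(u) \le w'\cdot\mathbf 1_{G'}$, and chaining gives $w'\cdot p'\le w'\cdot\mathbf 1_{G'}$.

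The iterated and tree versions follow by induction on the number of added vertices, ordering the new vertices so that each is added adjacent to an already-present vertex other than $r$; at each step the same unfolding argument applies, and the monotone relation $w'(u)\le\tfrac12 w(u^+)$ is preserved down any attached tree because $w\cdot\mathbf 1$ only grows. I expect the main (minor) obstacle to be the careful handling of the parity leftover on $u$: one must be sure that discarding a single pebble cannot turn an unsolvable configuration into a solvable one and cannot increase $w'\cdot p'$ — both are immediate, since a leaf holding one pebble is inert, but it is the one place the write-up has to be precise. Everything else is a routine reindexing of the dot products.
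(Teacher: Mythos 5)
Your proposal is correct and matches the paper's argument: the paper likewise moves as many pebbles as possible from $u$ to $u^+$ (which cannot decrease the weighted total since $w'(u)\le\frac12 w(u^+)$), applies the hypothesis to the resulting $r$-unsolvable configuration on $G$, and handles trees by induction on the number of attached vertices. Your careful accounting of the leftover parity pebble (absorbed by the slack $w'\cdot\mathbf{1}_{G'}-w\cdot\mathbf{1}_G=w'(u)$) is exactly the point the paper glosses over, so the write-up is fine.
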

\begin{proof}
If the new vertex $u$ has more than one pebble, we move as much weight as
possible from $u$ to $u^+$, which does not decrease the total weight on $G$. 
This proves the first statement.  The second
statement follows from taking convex combinations of $w$ and $w'$.  The final
statement follows by induction on the size of the tree $T$ that we attach (we
just proved the induction step, and the base case, $|T|=0$, is trivial).
\end{proof}

\subsection{The Cube and the Lemke Graph}

To illustrate the usefulness of Lemma~\ref{lem:cycleTail} and
Observation~\ref{obs1}, we give two easy applications of this method. We show
that $\pi(Q_3) = 8$ and $\pi(L) = 8$, where $Q_3$ is the 3-dimensional cube and
$L$ is the Lemke graph, shown in Figure~\ref{fig:lemkeg}.  When using tree
strategies alone, Hurlbert's method cannot handle these graphs.

\begin{figure}[htb]
\begin{center}
\includegraphics[scale=.6]{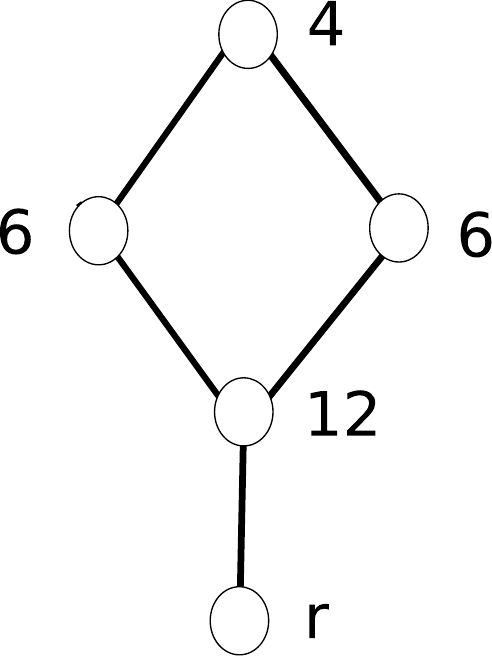}
\unitlength 1.2mm 
\caption{A valid non-tree weight function.\label{fig:square}}
\end{center}
\end{figure}

\begin{prop}
If $Q_3$ is the 3-cube, then $\pi(Q_3) = 8$.
\end{prop}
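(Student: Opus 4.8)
The plan is to apply the Covering Lemma (Lemma~\ref{coveringLemma}) to a convex combination of the non-tree weight functions produced by Lemma~\ref{lem:cycleTail}. Since $Q_3$ is vertex-transitive, it suffices to bound $\pi(Q_3,r)$ for one fixed root $r$; the lower bound $\pi(Q_3)\ge |V(Q_3)|=8$ is the trivial one, so the real work is the upper bound. Label the vertices of $Q_3$ by binary strings with $r=000$, so the antipode is $\overline{r}=111$. I want a valid weight function $w'$ that is constant (value $C$) on $V(Q_3)\setminus\{r\}$ with $\sum_{v\ne r}w'(v)<8C$, since then the Covering Lemma gives $\pi(Q_3,r)=|V(Q_3)|=8$.

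The conceptual point is that a tree strategy is too ``top-heavy'' here: any tree reaching $\overline{r}$ forces weight $4$ at a neighbor of $r$ and weight $1$ at $\overline{r}$, a ratio of $4$, far worse than the ratio $8/7$ that the Covering Lemma can tolerate on $7$ nonroot vertices. So I would instead use Lemma~\ref{lem:cycleTail} in the degenerate case $s=t=2$, where the ``even cycle with a tail'' is simply a $4$-cycle together with a single pendant edge to $r$. For each of the three neighbors $u$ of $r$, the cube $Q_3$ contains a $4$-cycle $C_u$ through $u$ whose vertex antipodal to $u$ on $C_u$ is $\overline{r}$ (the two interior vertices of $C_u$ being the two common neighbors of $u$ and $\overline{r}$). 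Applying Lemma~\ref{lem:cycleTail} with $x_t=u$ and $x_0=\overline{r}$ yields a valid weight function $w_u$ equal to $2^2=4$ on $u$, equal to $2$ on each of the two vertices of $C_u$ other than $u$ and $\overline{r}$, equal to $\alpha=(2^2+2^1-2)/(2^2-1)=\frac43$ on $\overline{r}$, and $0$ elsewhere.

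Next I would set $w'=\frac13(w_{001}+w_{010}+w_{100})$. Running over the three choices of $u$: each neighbor of $r$ is the weight-$4$ apex of exactly one $C_u$ and lies outside the other two; each of the three distance-$2$ vertices lies (with weight $2$) on exactly two of the $C_u$; and $\overline{r}$ receives $\frac43$ from each. Hence $w'(v)=\frac43$ for every $v\ne r$, so $\sum_{v\ne r}w'(v)=\frac{28}{3}<\frac{32}{3}=\frac43\cdot 8$, and the Covering Lemma (with $C=\frac43$) gives $\pi(Q_3,r)=8$. Vertex-transitivity then yields $\pi(Q_3)=8$. The certificate is just these three cycle-tail strategies, each with coefficient $\frac13$.

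The routine parts are exhibiting the three required $4$-cycles explicitly inside $Q_3$ and the bookkeeping that the average is constant. The one step deserving care is checking that $s=t=2$ is a legitimate instance of Lemma~\ref{lem:cycleTail}, i.e., that the proof of that lemma still goes through when the tail $P_3$ degenerates to the single edge $x_t r$: it does, since the step bounding $W_C$ then invokes only the one-edge tree strategy on $x_tr$ (which is valid by Lemma~\ref{lem:wtfunction}), and the computation of $M=\alpha+2^{s+1}+2^t-4$ is unaffected. That the argument genuinely requires a non-tree strategy is the heart of the example, and is exactly why Hurlbert's original technique cannot certify $\pi(Q_3)=8$.
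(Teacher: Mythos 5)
Your proof is correct and takes essentially the same route as the paper: three cycle-plus-pendant-edge strategies from Lemma~\ref{lem:cycleTail} (one for each neighbor of $r$), combined into a constant weight function on $V(Q_3)\setminus\{r\}$ and fed into the Covering Lemma; the paper's certificate is exactly yours scaled by a factor of $9$ (weights $12,6,6,4$ per strategy, summing to $12$ everywhere). The only cosmetic discrepancy is that the paper quotes the parameters as $t=2$, $s=0$ while your instantiation uses $s=t=2$, which is the reading that actually makes $\alpha=\frac43$ and matches the figure.
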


\begin{proof}
Every graph $G$ satisfies $\pi(G)\ge |V(G)|$, so $\pi(Q_3)\ge 8$.
Thus, we focus on proving that $\pi(Q_3)\le 8$.

To show that $\pi(Q_3) \le 8$, we first note that the weight function in Figure
\ref{fig:square} is valid.  Since a valid weight function remains valid when
multiplied by a positive constant (in this case 3), this statement follows from
Lemma~\ref{lem:cycleTail}, with $t=2$ and $s=0$.

%

\begin{figure}[bht]
\begin{center}
\includegraphics[scale=.6]{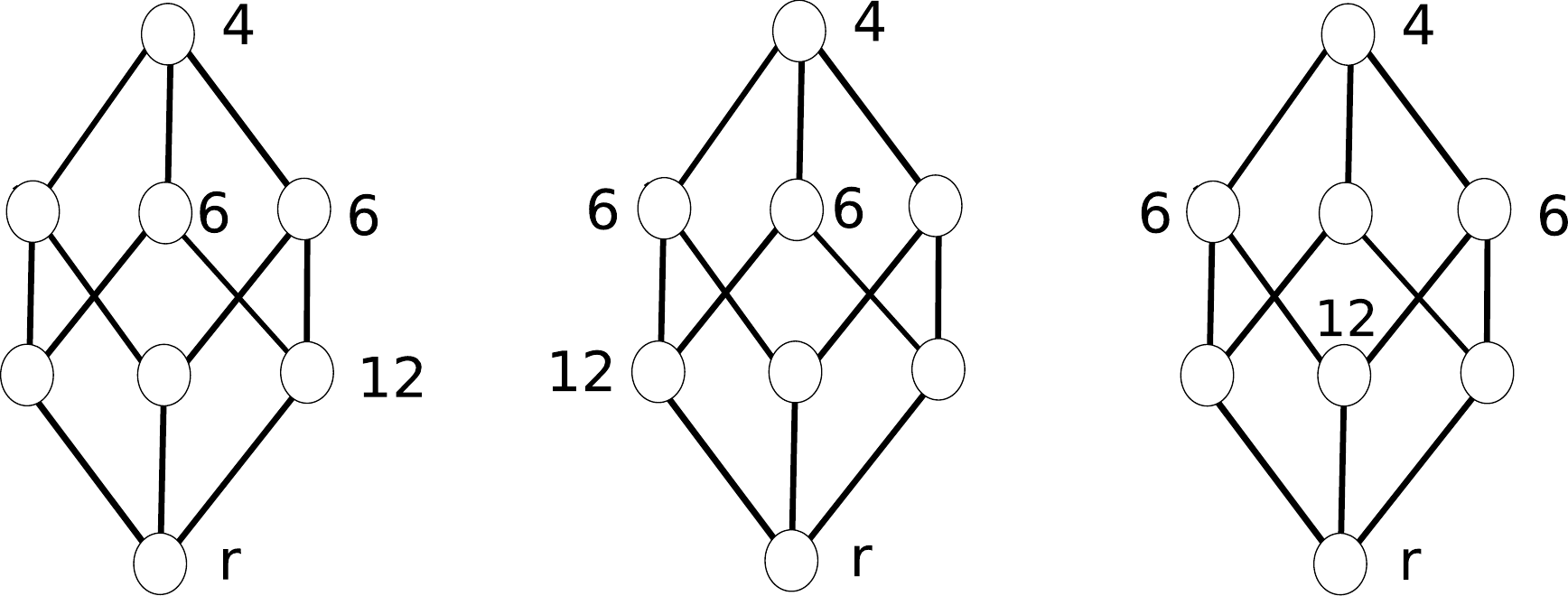}
\unitlength 1.2mm \caption{A certificate that $\pi(Q_3) \leq 8$.}
\label{cubeconfig}
\end{center}
\end{figure}

The convex combination of the three strategies shown in Figure
\ref{cubeconfig} (each taken with weight 1) yields $w'$ such that $w'(v)=12$
for all $v\ne r$.  Thus, the 
\hyperref[coveringLemma]{Covering Lemma} 
shows that $\pi(Q_3)\le 8$,
so $Q_3$ is Class 0.  These three strategies in Figure \ref{cubeconfig} also
serve as a certificate that $\pi(Q_3) \leq 8$, and they yield an efficient
algorithm for getting a pebble to $r$, starting from any configuration $p$ with
$|p|\ge 8$.
\end{proof}

The most famous long-standing pebbling problem is Graham's
conjecture: for all graphs $G_1$ and $G_2$,  $\pi(G_1 \Box G_2) \leq
\pi(G_1)\pi(G_2)$; here $\Box$ denotes the Cartesian product.  This
conjecture has been verified only for a few classes
of graphs.  Specifically, it holds when $G_1$ and $G_2$ are both cycles
\cite{Her1}, both trees \cite{moews}, both complete bipartite graphs
\cite{feng2}, or a fan and a wheel \cite{feng}.

When considering Graham's conjecture, we are interested in the Lemke Graph,
denoted $L$ and shown on the left in Figure \ref{fig:lemkeg}.  This graph is of interest
because it is the smallest graph without the \textit{2-pebbling property}.
The exact definition is unimportant for us here; what matters, is that if $G$
has this property, then $\pi(G\Box H)\le \pi(G)\pi(H)$ for every graph $H$.
This makes $L \Box L$ a natural candidate for disproving Graham's
conjecture.
\begin{figure}[htb]
\begin{center}
\includegraphics[scale=.75]{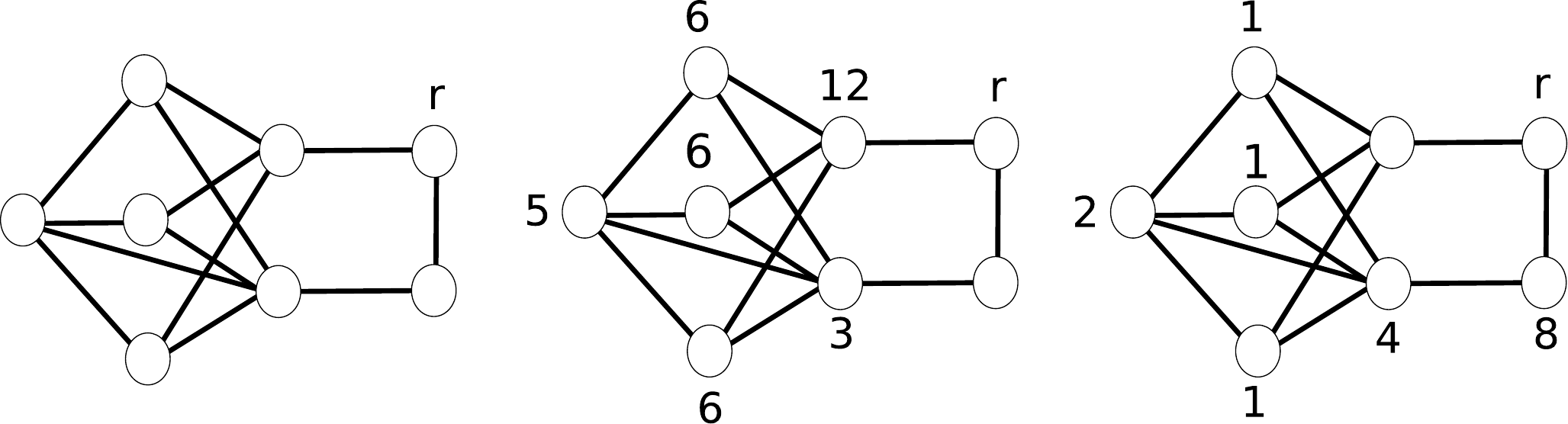}
\unitlength 1.2mm \caption{The Lemke graph.\label{fig:lemkeg}}
\end{center}
\end{figure}

Hurlbert asserted that it is impossible, using tree strategies alone, to
obtain the pebbling number of the Lemke graph via the linear programming
technique. However, by using this method with more general weight functions, we
prove that $\pi(L) = 8$.

\ignore{Before we do this, we would like to prove a lemma regarding
adding subtrees to weight functions that satisfy the constraints for
a valid weight function.  In particular we prove a lemma that allows
us to attach trees that fit the constraints of a \textit{strategy}
within the context of Hurlbert's paper.  Need to state and prove a
theorem here, should be about 1/2 a page.

Idea:  Use induction on the number of vertices of all the added
trees.  Use induction ala Hurlbert's paper (basically make the same
argument on the trees).}

\begin{figure}[!bht]
\begin{center}
\includegraphics[scale=.7]{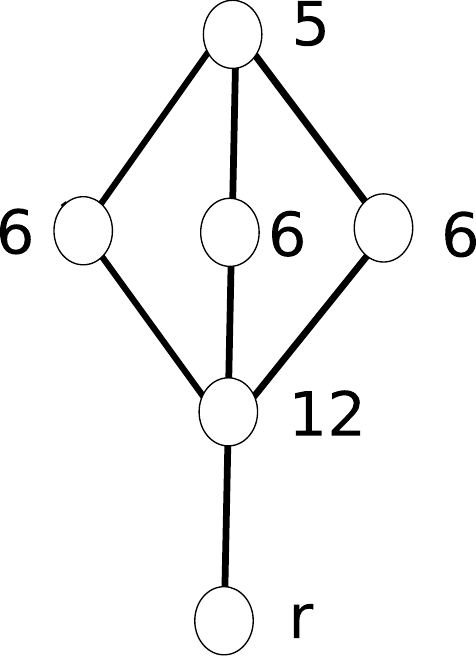}
\unitlength 1.2mm \caption{A weight function useful for the Lemke
graph.\label{fig:threepth}} 
\end{center}
\end{figure}

\begin{thm}
If $L$ is the Lemke graph, then $\pi(L) = 8$.
\end{thm}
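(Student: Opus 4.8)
The lower bound $\pi(L)\ge |V(L)|=8$ holds for every graph, so the task is to prove $\pi(L,r)\le 8$ for each choice of root $r\in V(L)$. The plan is to mimic the proof that $\pi(Q_3)=8$: for each root $r$ I will exhibit a valid weight function $w'$ for $(L,r)$ with $w'(r)=0$ and $w'(v)=C$ for every $v\in V(L)\setminus\{r\}$ (for a suitable constant $C>0$), obtained as a convex combination of strategies, each of which is either a tree strategy --- valid by Lemma~\ref{lem:wtfunction} --- or an even-cycle-with-tail weight function as in Lemma~\ref{lem:cycleTail}, possibly extended by pendant trees using Observation~\ref{obs1}. Since then $\sum_{v\ne r}w'(v)=7C<8C=C\,|V(L)|$, the Covering Lemma (Lemma~\ref{coveringLemma}) yields $\pi(L,r)\le 8$. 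Because $L$ has only a few vertex orbits under $\mathrm{Aut}(L)$, it is enough to do this for one representative $r$ of each orbit; combining over the orbits gives $\pi(L)\le 8$, hence $\pi(L)=8$.

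Concretely, for each root I would first look for a covering by tree strategies alone; but, as Hurlbert observed, tree strategies do not suffice to certify $\pi(L)=8$, so for at least one root a cycle strategy is unavoidable. For such a root the steps are: (i) locate inside $L$ an even cycle $C_{2t}$ joined by a path to $r$, together with whatever pendant trees are needed so that the union of supports covers all of $V(L)$; (ii) weight this subgraph as in Lemma~\ref{lem:cycleTail} --- powers of two along the branches and $\alpha=\frac{2^{s}+2^{t-1}-2}{2^{s}-1}$ on the far $2$-vertex --- and extend it by Observation~\ref{obs1}; (iii) average this with one or two tree strategies and solve the small linear system that forces $w'$ to be constant on $V(L)\setminus\{r\}$. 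The weight function in Figure~\ref{fig:threepth} is, I expect, exactly the cycle-tail ingredient used here, and Figure~\ref{fig:lemkeg} records the full certificate.

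Verifying a proposed $w'$ is then routine: check that each summand is a genuine strategy of the claimed type (a tree strategy, or an instance of Lemma~\ref{lem:cycleTail} with its attached trees), that the coefficients are nonnegative and sum to $1$, and that the combination really is constant off $r$; the Covering Lemma does the rest, and the bound $\pi(L,r)\le 8$ follows uniformly in $r$. The genuine obstacle is steps (i)--(iii) for the stubborn root(s): spotting the right cycle-plus-tail-plus-trees and the coefficients that make every non-root coordinate come out equal --- a small but non-obvious search, which is precisely where tree strategies fail and Lemma~\ref{lem:cycleTail} is needed. Once the certificates are in hand, everything else is bookkeeping, and --- as with the cube --- they also yield an explicit procedure for moving a pebble to $r$ from any configuration of size at least $8$.
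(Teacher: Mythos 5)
Your high-level strategy coincides with the paper's: the trivial lower bound $\pi(L)\ge 8$, then per-root upper bounds obtained by combining tree strategies with more general valid weight functions and invoking the Covering Lemma. But what you have written is a plan rather than a proof, and the part you defer --- ``a small but non-obvious search,'' ``I expect\dots'' --- is the entire mathematical content of the theorem. The paper does not search orbit by orbit: it cites Hurlbert's computation that $\pi(L,v)=8$ for every vertex except one specific root $r$, and then constructs an explicit certificate for that single root. That certificate consists of one nonbasic tree strategy together with the weight function of Figure~\ref{fig:threepth} (extended by a pendant vertex via Observation~\ref{obs1}); their sum has every non-root weight at least $7$ and total weight $55$, so the general form of the Covering Lemma gives $\pi(L,r)\le\lfloor 55/7\rfloor+1=8$. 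Note that this certificate is \emph{not} constant off the root. Your insistence on forcing $w'(v)=C$ for all $v\ne r$ is an unnecessary restriction: the linear system you propose to solve may have no solution among valid strategies, and the weaker condition $w'(v)\ge C$ already suffices.

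The more serious gap is in your step (ii). You assume every non-tree ingredient will be an instance of Lemma~\ref{lem:cycleTail}, possibly extended by pendant trees via Observation~\ref{obs1}. The weight function that actually makes the Lemke graph work (Figure~\ref{fig:threepth}) is not of that form: its validity is established in the paper by a separate ad hoc case analysis (Claim~1 of the proof), which splits on whether each vertex weighted $6$ carries a pebble, invokes the Figure~\ref{fig:square} function scaled by $\tfrac54$ in one case, and in the other bounds the pebbles on the vertices weighted $12$ and $5$ directly. So the toolkit you list is not known to contain a certificate for the stubborn root, and producing one requires both finding the right weight function and proving its validity from scratch --- neither of which your proposal carries out. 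Without an explicit, verified certificate, the upper bound $\pi(L,r)\le 8$ is not established.
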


\begin{proof}
Note that $\pi(L) \geq |V(L)|=8$, so we focus on proving the upper bound. 
Hurlbert~\cite{hurllinprog} showed that $\pi(L,v) = 8$ for all
vertices $v \in L$ except for $r$, as shown on the left in
Figure~\ref{fig:lemkeg}.  So we only need to show that
$\pi(L, r) = 8$.  Now we need the weight function in
Figure~\ref{fig:threepth}.

\setcounter{claim}{0}
\begin{claim} \label{wtfun2}
The weight function in Figure \ref{fig:threepth} is valid.
\end{claim}

The proof of this claim is very similar to the proof of Observation~\ref{obs1},
so we just sketch the ideas.  If any vertex weighted 6 has no pebbles, then we
invoke the weight function in Figure~\ref{fig:square}, and
multiply the resulting inequality by $\frac54$ to get one that implies what we
want; so we assume that each vertex weighted 6 has a pebble. If the vertex
weighted 12 has a pebble, then the vertex weighted 5 has
at most one pebble, so we are done.  Otherwise, the vertex weighted 5 has at
most 3 pebbles; again, we are done.  This proves the claim.
%

The proof that $\pi(L,r)\le 8$ uses the two strategies in
Figure~\ref{fig:lemkeg}.  The rightmost is a nonbasic tree strategy. The center
strategy is derived from the weight function in Figure~\ref{fig:threepth} by
adding a vertex with weight 3 adjacent to some vertex with weight 6.  This
weight function is valid, by Observation~\ref{obs1}.  When we sum the weights
of the two strategies, each vertex has weight at least 7 and the total weight
is 55.  Now the \hyperref[coveringLemma]{Covering Lemma} implies that
$\pi(L,r)\le \floor{\frac{55}7}+1=8$.
\end{proof}

\subsection{Larger Graphs}

In this section we determine the
pebbling number of the Bruhat graph of order 4. The (weak)
\textit{Bruhat graph} of order $m$ has as its vertices
the permutations of $\{1,\ldots,m\}$; two vertices are adjacent if the
corresponding permutations differ by an adjacent transposition.  Since this graph is
vertex-transitive, we can choose the root vertex arbitrarily.  
Using the linear
programming method, Hurlbert proved that $\pi(B_4)~\leq~72$.
By using more general weight functions, we calculate the pebbling
number of this graph exactly.  
\begin{figure}[bht]
\begin{center}
\includegraphics[width=14cm,height=9cm]{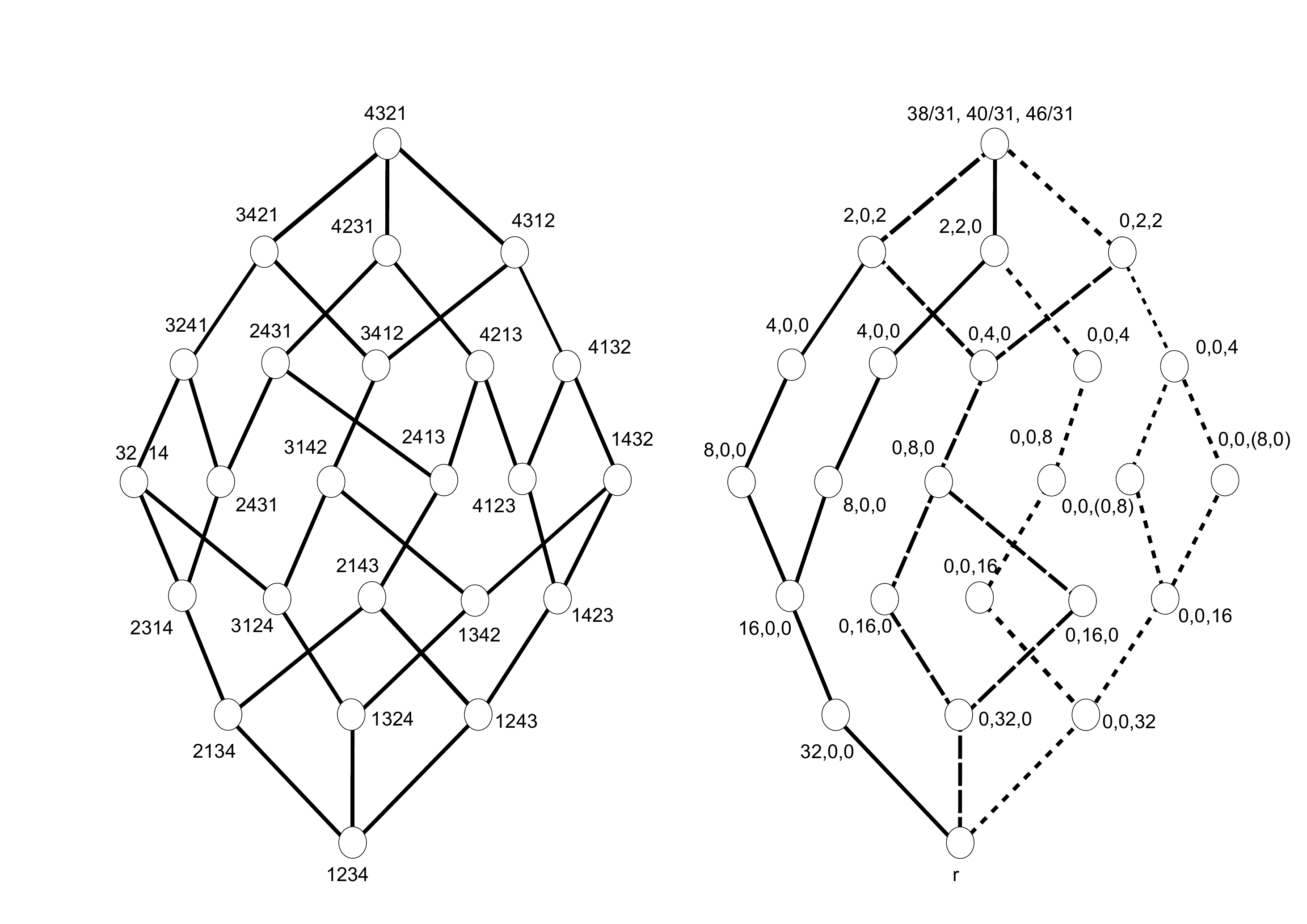}
\unitlength 1.2mm \caption{The Bruhat graph of order 4 and a set of
strategies proving $\pi(B_4) = 64$.} \label{fig:bruhat}
\end{center}
\end{figure}

\begin{thm}
If $B_4$ is the Bruhat graph of order 4, then $\pi(B_4) = 64$.
\end{thm}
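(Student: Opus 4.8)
The plan is to follow the same template used for $Q_3$ and the Lemke graph: show $\pi(B_4) \ge 64$ trivially, and then exhibit a small family of valid weight functions whose convex combination, via the Covering Lemma, forces $\pi(B_4) \le 64$. Since $B_4$ is vertex-transitive, it suffices to fix one root $r$ and bound $\pi(B_4, r)$. First I would recall that $B_4$ has $24$ vertices, so the trivial lower bound gives $\pi(B_4) \ge 24$; but to get $64$ we need a stronger lower bound. Here $B_4$ has diameter $6$ (the longest distance between permutations differing by $\binom{4}{2}=6$ adjacent transpositions, namely $1234$ and $4321$), so $\pi(B_4) \ge 2^6 = 64$. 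That disposes of the lower bound.

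For the upper bound, the idea is to build several strategies. Each will be either a tree strategy (in Hurlbert's sense), a nonbasic tree strategy, or — and this is where the new machinery earns its keep — a weight function of the cycle-with-tail type from Lemma~\ref{lem:cycleTail}, possibly with extra trees or single vertices attached via Observation~\ref{obs1}. The reference to Figure~\ref{fig:bruhat} indicates the authors have a specific set of such strategies in mind. I would look for strategies anchored on the antipodal vertex $4321$ (at distance $6$ from $r=1234$), since the $2^6$ growth along a shortest path is exactly what a valid weight function needs to ``cover'' each far vertex with weight $\ge 1$ after rescaling. Because $B_4$ is quite symmetric, one natural approach is to take a handful of strategies that are images of one another under the automorphism group, so that the resulting convex combination $w'$ inherits enough symmetry to verify $w'(v) \ge C$ for every non-root $v$ more or less uniformly. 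Then one computes the total weight $\sum_{v \ne r} w'(v) < M$ and invokes the Covering Lemma to conclude $\pi(B_4, r) \le \lfloor M/C \rfloor + 1 = 64$; equivalently, one arranges $\sum_{v\ne r} w'(v) < 64\,C$.

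The key steps in order: (1) fix $r = 1234$ and confirm $\mathrm{diam}(B_4) = 6$, giving the lower bound; (2) present the individual strategies drawn in Figure~\ref{fig:bruhat}, and for each one cite the appropriate justification of validity — Lemma~\ref{lem:wtfunction} for tree and nonbasic tree strategies (the latter since nonbasic strategies are conic combinations of basic ones), and Lemma~\ref{lem:cycleTail} together with Observation~\ref{obs1} for the cycle-with-tail strategies and their attached subtrees; (3) take the stated convex combination, and verify the two hypotheses of the Covering Lemma, namely the pointwise lower bound $w'(v) \ge C$ for all $v \ne r$ and the strict total-weight bound; (4) apply the Covering Lemma to get $\pi(B_4, r) \le 64$, and combine with vertex-transitivity and the lower bound to conclude $\pi(B_4) = 64$.

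The main obstacle is step (3): checking that the convex combination assigns weight at least $C$ to every one of the $23$ non-root vertices while keeping the total strictly below $64\,C$. This is a finite but genuinely fiddly verification — a single vertex receiving too little weight from every strategy in the family would break the argument, so the choice of strategies has to be exactly right, and near the ``equator'' of the graph (vertices at distance $2$ or $3$ from $r$, where many strategies contribute small amounts) the bookkeeping is delicate. I expect the authors handle this by displaying the strategies explicitly in the figure and tabulating vertex weights, and I would do the same: organize the $24$ vertices by distance from $r$, list each strategy's contribution, and confirm the column sums. Establishing validity of the non-tree strategies (step 2) is conceptually the novel part, but it is essentially immediate once one recognizes the relevant subgraph of $B_4$ as an instance of the configuration in Lemma~\ref{lem:cycleTail} with the right parameters $s$ and $t$, plus pendant trees handled by Observation~\ref{obs1}.
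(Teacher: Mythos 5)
Your outline matches the paper's approach exactly: the lower bound from $\mathrm{diam}(B_4)=6$, and an upper bound from a convex combination of strategies (the paper uses four, with multiplicities $\frac14,\frac14,\frac18,\frac18$, yielding $w'(v)\ge 1$ for all $v\ne r$ and total weight $63$, hence $\pi(B_4)\le 64$ by the Covering Lemma). However, as a proof your proposal has a genuine gap: the entire mathematical content of the upper bound is the explicit construction of the weight functions and the verification that each is valid and that their combination covers every vertex, and you never produce these. You correctly identify that this is the hard, ``fiddly'' part, but deferring it to ``what the authors must have drawn in the figure'' leaves the theorem unproved.

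A second, more specific gap: you assert that validating the non-tree strategies is ``essentially immediate'' once one recognizes instances of Lemma~\ref{lem:cycleTail} plus Observation~\ref{obs1}. That is true for two of the paper's four strategies (with parameters $t=4,s=1$ and $t=5,s=0$ respectively), but the remaining non-tree strategy is \emph{not} an instance of that lemma. Its validity requires a separate argument in which a weighted $5$-vertex path plays the role of the tail $P_3$ in the proof of Lemma~\ref{lem:cycleTail}, together with a case analysis on whether the neighbors of the low-weight cycle vertex carry pebbles. So even granting the figure, your step (2) would not go through as stated for all of the strategies actually needed; one of them demands a bespoke validity proof that your plan does not anticipate.
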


\begin{proof}
The diameter of $B_4$ is 6, so $\pi(B_4) \geq
2^6=64$.  We need to show that $\pi(B_4) \leq 64$.

Note that the rightmost graph in Figure~\ref{fig:bruhatweight} describes two
strategies, as we explain below.  
We combine these four strategies, as shown in Figure~\ref{fig:bruhat},
(weighted with multiplicities $\frac14, \frac14,\frac18,\frac18$) to get a
weight function $w'$, such that $w'(v)\ge 1$ for all $v$ and $w'\cdot
\mathbf{1}_{B_4}=63$.  This proves the desired upper bound $\pi(B_4)\le 63+1$. 
Thus, we only need to show that the four strategies in
Figure~\ref{fig:bruhatweight} are valid.

\begin{figure}[!htb]
\begin{center}
\includegraphics[scale=.5]{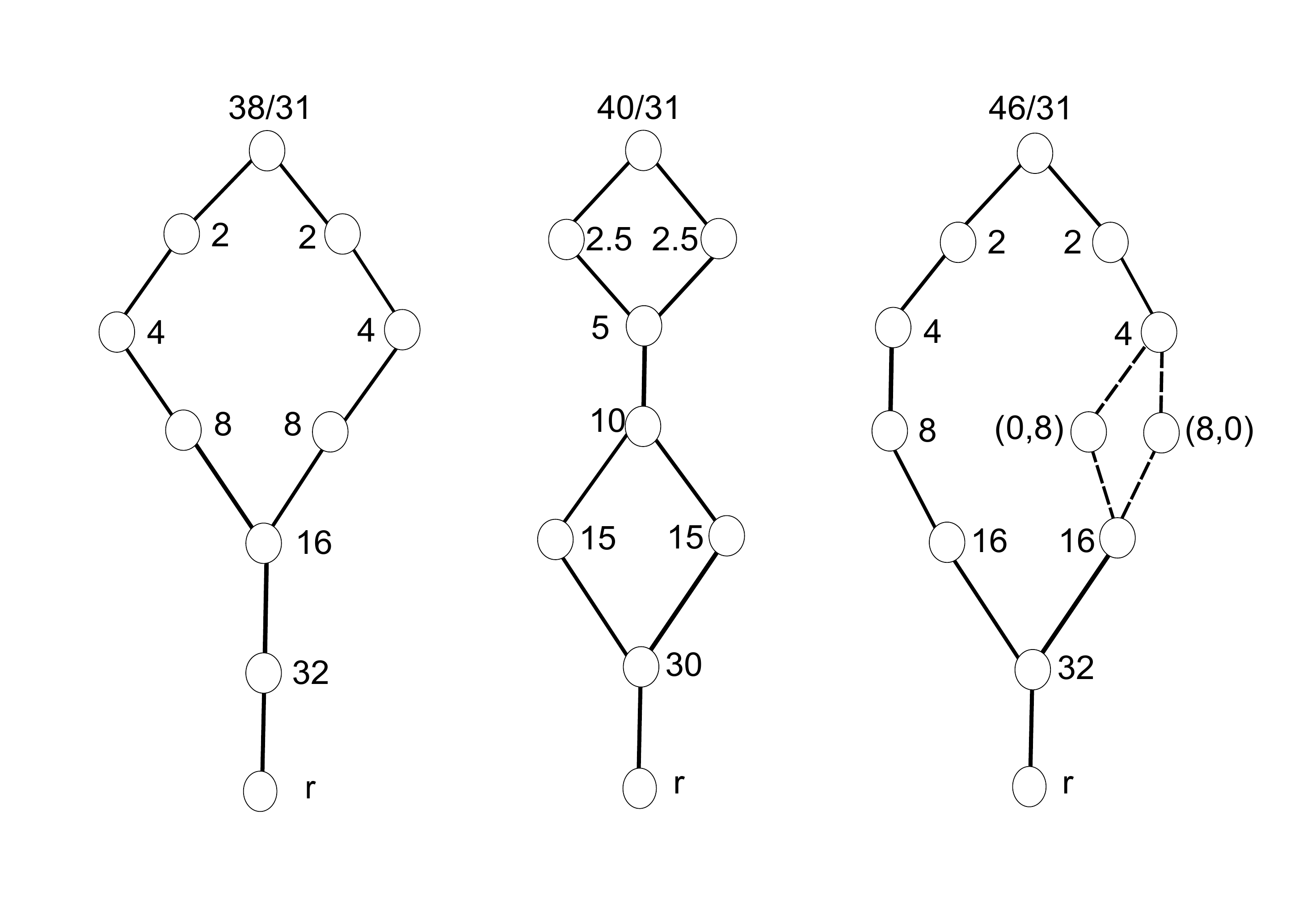}
\unitlength 1.2mm \caption{More general weight functions.
\label{fig:bruhatweight}}
\end{center}
\end{figure}

The weight function on the right denotes two different weight functions on $G$;
the first includes the vertex labeled $(8,0)$ but not the one labeled $(0,8)$,
and the second vice versa.  By Lemma~\ref{lem:cycleTail}, the leftmost and
rightmost strategies are valid (the former with $t=4$ and $s=1$; the latter
with $t=5$ and $s=0$).

The proof that the middle strategy is valid is similar to the proof of
Lemma~\ref{lem:cycleTail}, so we just sketch the ideas.  Note that weights 30,
15, 10, and 5 (with the other vertices unweighted) are consistent with
Lemma~\ref{lem:cycleTail} (mulitplied by $\frac{15}2$), when $t=2$ and $s=0$, and
adding a vertex by Observation~\ref{obs1}.  Thus, we know that if $p$ is
$r$-unsolvable, then these five vertices have weight at most 75.
The key observation is that these five vertice can play the role of $P_3$ in the
proof of Lemma~\ref{lem:cycleTail}.  Let $x_0$, $x'_1$, and $x''_1$ denote the
vertex labeled $\frac{40}{31}$ and its two neighbors, respectively.
We first consider the case where $x'_1$ or $x''_1$, say $x'_1$, has no pebbles.
In this case we move as much weight as possible to the vertex labeled 5 from
$x_0$ and $x''_1$.  We also consider the case where both $x'_1$ and $x''_1$ have
pebbles.  Either we can reduce to the previous case, or else we get
two inequalities.  We add these two to the inequality for the bottom 5 vertices,
which gives the desired inequality.
\end{proof}



\section{Class 0 Graphs}
\label{sec:class0}

\subsection{Preliminaries}

In this section, we study Class 0 graphs.
We focus on graphs with diameter at least 2 since those with diameter 0, a
single vertex, and diameter 1, a complete graph, are well understood.
A graph $G$ is \textit{Class 0} if its pebbling number is equal to its number
of vertices, i.e., $\pi(G)=|V(G)|$.  Recall that always $\pi(G)\ge |V(G)|$, so
Class 0 graphs are those where this trivial lower bound holds with equality.
For each vertex $v$, we write $N(v)$ for the set of vertices adjacent to $v$,
and we write $N[v]$ to denote $N(v)\cup\{v\}$.
For a graph $G$, let $e(G)$ denote the the number of edges in $G$.
In this section, we prove lower bounds on $e(G)$ for all Class 0 graphs.

Blasiak et al.~\cite{blasiak} showed that every $n$-vertex Class 0 graph $G$
has $e(G)\ge \Floor{\frac{3n}{2}}$.  They also conjectured (see
\cite[p.~19]{hurllinprog}) that for some constant $C$ and for all sufficiently
large $n$ there exist $n$-vertex Class 0 graphs with $e(G)\le \Floor{\frac{3n}{2}}+C$.
In particular, they defined a family of ``generalized Petersen graphs'' of
arbitrary size and diameter with one vertex of some fixed degree $m$ and all
other vertices of degree 3.  They conjectured that these graphs are all Class 0.
We disprove this conjecture in a very strong sense.  Shortly, we prove that for
fixed $m$, all sufficiently large graphs of this form are not Class 0. 
(Figure~\ref{fig:genPet} shows $P_{8,2}$, one of these generalized Petersen
graphs that is not Class 0.) Later in this section, we extend this idea to show
that every $n$-vertex Class 0 graph $G$ has $e(G)\ge \frac53n-\frac{11}3$.
To conclude the section, for all diameter 2
graphs $G$ we strengthen this lower bound to $e(G) \ge 2n - 5$.
Further, we characterize the graphs where this bound holds with
equality (which include two infinite families).
\begin{figure}[bht]
\centering
\includegraphics[scale = 1.1]{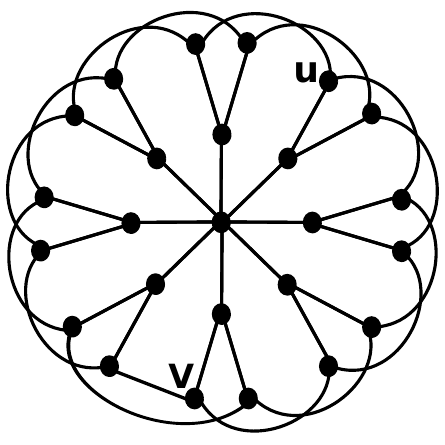}
\caption{The generalized Petersen graph, $P_{8,2}$, is not Class 0.}
\label{fig:genPet}
\end{figure}
%


Our main tool for proving bounds on $e(G)$ is the following lemma.

\begin{lem}[Small Neighborhood Lemma]
Let $G$ be a Class 0 graph.  If $u, v\in V(G)$, $d(u)=2$, and $u$ and $v$ are
distance at least 3 apart, then $d(v)\ge 4$.  Similarly, if $u,v\in V(G)$,
$d(u)=3$, $u$ and $v$ are distance at least 4 apart, and each neighbor of $v$
is a 3-vertex, then $d(v)\ge 4$.
\label{smallNeighborhood}
\end{lem}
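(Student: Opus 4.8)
The plan is to prove the contrapositive in each case using the Covering Lemma (Lemma~\ref{coveringLemma}): if the degree condition on $v$ fails, I will build a convex combination $w'$ of tree strategies rooted at a suitable vertex so that $\sum_{x\ne r} w'(x) < C\,|V(G)|$ with $w'(x)\ge C$ for all $x\ne r$, forcing $\pi(G,r) = |V(G)|$ — but since $G$ is Class 0, that gives no contradiction by itself, so instead I will aim directly at producing an \emph{unsolvable} configuration of $|V(G)|$ pebbles for some root, contradicting Class 0. Concretely, for the first statement: suppose $d(u)=2$, $d(v)\le 3$, and $d(u,v)\ge 3$. Root at $u$, say with $N(u)=\{a,b\}$. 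I would place one pebble on every vertex except $u$, then argue this is unsolvable, and then \emph{improve} the configuration near $v$: because $v$ has degree at most $3$ and is far from $u$, I can redistribute the pebbles on $N[v]$ (removing the pebble from $v$ and from its neighbors, and dumping a controlled number of extra pebbles onto one neighbor) so that the configuration still has $|V(G)|-1$ pebbles off the root but no pebble can ever reach $u$; the distance-$3$ condition ensures that no ``shortcut'' through $v$ helps a solver. The standard way to make this rigorous is via the weight-function / LP machinery: exhibit a valid weight function $w$ with $w(u)=0$, small total weight $w\cdot\mathbf 1_G$, and $w(x)\ge 1$ everywhere else, built from tree strategies aimed at $u$ that route through both $a$ and $b$; the low-degree, far-away vertex $v$ is exactly the obstruction to covering all of $V(G)$ cheaply, and when $d(v)\le 3$ the LP value drops below $|V(G)|$.

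For the second statement the approach is the same but one level deeper: assume $d(u)=3$, $d(v)\le 3$, $d(u,v)\ge 4$, and every neighbor of $v$ is a $3$-vertex. Here I would root at $u$ and again compare against the all-ones configuration, but now I need to move more weight, since $u$ has degree $3$ rather than $2$. The point of the hypothesis ``every neighbor of $v$ is a $3$-vertex'' is that the local structure around $v$ is very sparse — $v$ together with $N(v)$ spans few edges and the $N(v)$ vertices have no extra capacity to absorb pebbles — so if in addition $d(v)\le 3$, the entire ball $N[v]$ can be handled by a single cheap tree strategy leaving it badly ``under-covered,'' and the distance-$4$ condition guarantees this ball is disjoint from (and non-interfering with) the part of $G$ near $u$ that the other strategies must cover. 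Summing the strategies produces $w'$ with total weight strictly below $C|V(G)|$, so $\pi(G,u) = |V(G)|$ only holds vacuously and in fact one gets an unsolvable configuration of size $|V(G)|$ rooted at $u$, contradicting Class 0.

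I expect the main obstacle to be the case analysis in the second statement: controlling the interaction between the tree strategies routed toward $u$ and the configuration placed on the ball $N[v]$, and in particular verifying that one can always realize the needed redistribution of pebbles on $N[v]$ regardless of which neighbors of $v$ happen to be adjacent to each other or lie on short $u$–$v$ paths. The distance hypotheses ($\ge 3$, resp.\ $\ge 4$) are precisely what rule out the bad overlaps, but turning ``these balls are far apart'' into a clean statement that the local deficiency at $v$ adds up with the global count requires carefully choosing which vertex to root at and checking that $v$'s neighbors are not forced to carry weight from two strategies at once. A secondary technical point is bookkeeping the constant: showing that $d(v)\le 3$ (rather than $d(v)\le 2$) already suffices to push the weight total below threshold, which is where the ``$\ge 4$ edges'' conclusion is exactly tight. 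I would organize the write-up as two parallel arguments, stating the weight function explicitly in each case and invoking Lemma~\ref{coveringLemma} (together with the easy converse direction, that a Class 0 graph cannot have an unsolvable configuration of size equal to its order) to finish.
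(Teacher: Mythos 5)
Your instinct to argue by contradiction and exhibit an unsolvable configuration of size $|V(G)|$ is the right one, and it is essentially what the paper does: for the first statement it places $7$ pebbles on $v$, $0$ pebbles on every vertex of $N[u]\cup N(v)$, and $1$ pebble on each remaining vertex. Since $d(u)=2$, $d(v)\le 3$, and the distance condition makes $N[u]$ and $N[v]$ disjoint, $|N[u]\cup N[v]|\le 7$, so this configuration has at least $|V(G)|$ pebbles; yet at most one pebble can ever leave $N[v]$, and that lone pebble cannot produce two pebbles on a neighbor of $u$ because $N[u]$ starts empty. (The second statement is the same with $15$ pebbles on $v$ and all of $N[N[v]]\setminus\{v\}$ emptied, using that each neighbor of $v$ is a $3$-vertex to bound $|N[N[v]]|\le 10$.) Your sketch never pins this down: you do not say how many pebbles go near $v$; you do not empty $N[u]$, which is essential --- if the neighbors of $u$ keep their single pebbles, the one pebble that escapes $N[v]$ can cascade along a path of vertices each carrying one pebble and arrive at a neighbor of $u$ that already holds a pebble, making the configuration solvable; and you do not carry out the count showing the configuration has at least $|V(G)|$ pebbles, which is exactly where every hypothesis ($d(u)$, $d(v)\le 3$, the distance bound, and in the second case the degrees of the vertices in $N(v)$) gets used.

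The more serious flaw is your repeated appeal to the Covering Lemma and the weight-function machinery. That machinery only produces \emph{upper} bounds on $\pi(G,r)$: a valid weight function is an inequality satisfied by all $r$-unsolvable configurations, so a cheap covering certifies that $G$ \emph{is} Class 0 at $r$. It cannot certify that a given configuration is unsolvable, and statements like ``the LP value drops below $|V(G)|$'' or ``total weight strictly below $C|V(G)|$, so one gets an unsolvable configuration'' run the implication in the wrong direction --- a small LP optimum is evidence \emph{for} Class 0, never against it. To prove this lemma you must construct the unsolvable configuration explicitly and verify unsolvability by a direct pebbling argument, as above; the LP framework of Sections 2 and 3 plays no role here.
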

\begin{proof}
The proofs for both statements are similar. In each case, we assume the statement
is false and construct a configuration with $|V(G)|$ vertices that is
$u$-unsolvable.  Consider the first statement first.  Suppose, to the contrary,
that $u$ and $v$ are as required, but $d(v)\le 3$.  Form configuration $p$ by
putting 7 pebbles on $v$, 0 pebbles on each vertex of $N[u]\cup N(v)$, and 1
pebble on each other vertex.  Since $|N[u]\cup N[v]|\le 7$, this configuration
has at least $|V(G)|$ pebbles.
Now no pebble can reach $u$, since at most one pebble can leave $N[r]$.
This contradicts that $G$ is Class 0, so $d(v)\ge 4$.

Now consider the second statement.
Suppose, to the contrary, that $u$ and $v$ are as required, but $d(v)\le 3$.
Form configuration $p$ by putting 15 pebbles on $v$, 0 pebbles on each vertex of
$N[u]\cup (N[N[v]]\setminus\{v\})$, and 1 pebble on each other vertex.  Since
$|N[u]\cup N[N[v]]|\le 15$, the configuration has at least $|V(G)|$ pebbles, but
no pebble can reach $v$, since at most one pebble can leave $N[N[v]]$.
This contradicts that $G$ is Class 0.  Thus, $d(v)\ge 4$.
\end{proof}

\begin{cor}
For each integer $C$, there exists an integer $n_0$, such that if
$G$ is any $n$-vertex graph with $\delta(G)=3$, $n\ge n_0$, and $e(G)\le
\frac32n+C$, then $G$ is not Class 0.  
\end{cor}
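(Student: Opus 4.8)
The plan is to derive the corollary from the Small Neighborhood Lemma by a counting argument. Suppose $G$ is an $n$-vertex Class 0 graph with $\delta(G)=3$ and $e(G)\le \frac32 n + C$. Since $\sum_{v} d(v) = 2e(G) \le 3n + 2C$, the average degree is at most $3 + \frac{2C}{n}$, so the number of vertices of degree at least $4$ is small: writing $t$ for the number of vertices with $d(v)\ge 4$, we get $3(n-t) + 4t \le 2e(G) \le 3n+2C$, hence $t \le 2C$. Thus all but at most $2C$ vertices are $3$-vertices. In particular, once $n$ is large, $G$ has plenty of $3$-vertices, and the lemma restricts where they can sit relative to one another.

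The key step is to apply the second statement of Lemma~\ref{smallNeighborhood}: if $u$ is a $3$-vertex and $v$ is a $3$-vertex at distance at least $4$ from $u$ all of whose neighbors are $3$-vertices, we get a contradiction with $G$ being Class 0. So for \emph{every} $3$-vertex $u$, every $3$-vertex $v$ whose closed second neighborhood $N[N[v]]$ consists entirely of $3$-vertices must lie within distance $3$ of $u$. Since each such ``good'' $v$ (a $3$-vertex with $N[N[v]]$ all $3$-vertices) then has $N[N[v]] \subseteq$ the ball of radius $5$ around $u$, and $|N[N[v]]| = 15$ exactly while the ball of radius $5$ around a $3$-vertex $u$ has at most $1 + 3 + 6 + 12 + 24 + 48 = 94$ vertices, there can be at most a bounded number — certainly at most $94$ — of good vertices $v$ (their closed second neighborhoods need not be disjoint, but all are confined to a set of at most $94$ vertices, so in particular the number of good $v$ is at most $94$). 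Hence $G$ has at most $94$ good vertices.

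On the other hand, the number of $3$-vertices $v$ that are \emph{not} good is at most the number of vertices $v$ such that $N[N[v]]$ contains a non-$3$-vertex; each of the at most $2C$ non-$3$-vertices lies in $N[N[v]]$ for at most $1 + 4 + 4\cdot 3 = $ (a bounded number, at most $\Delta(\Delta-1)+\Delta+1$ with $\Delta$ bounded in terms of $C$) vertices $v$ — but $\Delta$ need not be bounded, so instead argue: a non-$3$-vertex $x$ lies in $N[N[v]]$ iff $d(x,v)\le 2$, i.e. $v \in N[N[x]]$; summing, the number of non-good $3$-vertices is at most $\sum_{x:\,d(x)\ge 4} |N[N[x]]|$, which is at most $(2C)\cdot(1 + \Delta(G) + \Delta(G)(\Delta(G)-1))$. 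Since $2e(G)\le 3n+2C$ bounds the number of high-degree vertices but not $\Delta$, I would instead bound $\sum_{x:\,d(x)\ge 4}|N[N[x]]| \le \sum_{x:\,d(x)\ge 4}\big(1 + \sum_{y\in N[x]} d(y)\big)$, and handle the total by a global edge count: $\sum_x d(x)^2$ is controlled because at most $2C$ vertices exceed degree $3$ and $\sum d(x) \le 3n+2C$. Concretely $\sum_{x:\,d(x)\ge 4}|N[N[x]]|$ is at most a function of $C$ alone once one observes each edge is counted boundedly often and only edges incident to the $\le 2C$ high-degree set contribute. Thus the total number of $3$-vertices is at most $94 + g(C)$ for some function $g$, while $G$ has at least $n - 2C$ vertices of degree $3$; taking $n_0 = 2C + 95 + g(C)$ yields a contradiction, so $G$ is not Class 0.

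The main obstacle is the bookkeeping in the last paragraph: bounding the number of non-good $3$-vertices when $\Delta(G)$ is not a priori bounded. The clean way around it is to note that a $3$-vertex $v$ fails to be good only because some vertex $x$ with $d(x)\ge 4$ satisfies $d(v,x)\le 2$; there are at most $2C$ such $x$, and for each fixed $x$ the set $\{v : d(x)=3,\ d(v,x)\le 2\}$ has size at most $1 + 3 + 3\cdot 2 = 10$, since such $v$ must be reached from $x$ by a path of length $\le 2$ through $3$-vertices only (the first step may go to any neighbor of $x$, but then $v$ is that neighbor or one of its other two neighbors — wait, the neighbor of $x$ could itself be high-degree; but then it is one of the $\le 2C$ high-degree vertices and $v$ adjacent to it is counted under that vertex instead). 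Pushing this observation through gives that each high-degree vertex ``spoils'' at most a bounded number of $3$-vertices once we route the count through the $3$-vertex neighbors, so the number of non-good $3$-vertices is $O(C)$, completing the argument.
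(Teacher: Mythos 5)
Your overall strategy matches the paper's: bound the number of vertices of degree at least $4$ by $2C$, and then argue that for $n$ large the second statement of the Small Neighborhood Lemma must be violated by some pair $(u,v)$. The paper's proof is only a three-sentence sketch of exactly this counting (it simply asserts $n_0 = 2C\cdot 3^5$ suffices), so your write-up is an expanded version of the same argument, and your final paragraph correctly isolates and resolves the one genuine subtlety: $\Delta(G)$ is not a priori bounded, but the sum of degrees over the high-degree vertices is at most $2C + 3(2C) = 8C$, so only $O(C)$ edges touch them and hence only $O(C)$ of the $3$-vertices can be ``spoiled.''

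One step, however, is false as written: the claim that the ball of radius $5$ around a $3$-vertex $u$ has at most $1+3+6+12+24+48 = 94$ vertices. That count assumes every vertex in the ball has degree $3$, which is precisely the unbounded-$\Delta$ issue you guard against elsewhere; a neighbor of $u$ could have degree $n-10$, in which case the ball contains almost all of $G$. (The aside that $|N[N[v]]| = 15$ exactly is also wrong --- for a good $v$ it is at most $1+3+6=10$ --- but that is immaterial.) The conclusion you want, that there are boundedly many good vertices, is still true for a reason you do not state: every internal vertex of a shortest $u$--$v$ path of length at most $3$ lies in $N[N[v]]$ and is therefore a $3$-vertex when $v$ is good, so counting such paths outward from $u$ gives at most $1+3+3\cdot 2+3\cdot 2\cdot 2=22$ candidates. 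Better yet, you can skip counting good vertices entirely, which is in effect what the paper does: once some good $v$ exists (guaranteed because only $O(C)$ of the at least $n-2C$ many $3$-vertices are spoiled), the set of vertices within distance $3$ of $v$ has size at most $22$, so for $n$ large some $3$-vertex $u$ lies at distance at least $4$ from $v$, and that single pair already contradicts the lemma.
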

\begin{proof}
We can choose $n_0$ sufficiently large so that there exists some pair of
vertices $u$, $v$ violating the second statement of the 
\hyperref[smallNeighborhood]{Small Neighborhood Lemma}.
Specifically, it suffices to find a 3-vertex $v$ such that every vertex
within distance four of $v$ is a 3-vertex.  To guarantee such a vertex $v$, we
can take, for example, $n_0 = 2C*3^5$.
\end{proof}

\subsection{Diameter at least 3}
Now we use the \hyperref[smallNeighborhood]{Small Neighborhood Lemma}
to prove, in Theorem~\ref{thm:delta3},
that every $n$-vertex Class 0 graph $G$ with diameter at least 3 has $e(G)\ge
\frac53n-\frac{11}3$.  The case $\delta(G)=2$ is complicated, so we handle it
separately, in Lemma~\ref{lem:delta2}.  For the case $\delta(G)\le 1$, we use
the following easy lemma from~\cite{clarke}.

\begin{lem}[\cite{clarke}]
Every Class 0 graph $G$ has no cut-vertices.  Specifically, $\delta(G)\ge 2$.
\label{lem:NoCut}
\end{lem}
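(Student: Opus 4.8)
The plan is to prove the contrapositive: if a connected graph $G$ on $n$ vertices has a cut-vertex, then $\pi(G)\ge n+1$, so $G$ is not Class~0. Fix a cut-vertex $w$, let $H$ be one component of $G-w$, and let $S=V(G)\setminus(V(H)\cup\{w\})$; then $S\ne\emptyset$, and there are no edges between $V(H)$ and $S$. Put the root $r\in V(H)$ and pick any vertex $d^*\in S$. I would use the configuration $p$ with $p(r)=p(w)=0$, $p(d^*)=3$, and $p(v)=1$ for every other vertex, which places exactly $3+(n-3)=n$ pebbles. The claim is that $p$ is $r$-unsolvable, which gives $\pi(G,r)\ge n+1$, hence $\pi(G)\ge n+1$. (Note a cut-vertex forces $n\ge 3$, so $p$ is well defined.)

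The heart of the argument is a short invariant showing that at most one pebble can ever reach $w$. Call a vertex \emph{big} if it currently holds at least two pebbles; every pebbling move is made from a big vertex. Initially $d^*$ is the unique big vertex, holding $3$ pebbles. A move from a big vertex holding $2$ or $3$ pebbles leaves that vertex with $0$ or $1$ pebble (not big) and raises one neighbor by $1$; that neighbor becomes big exactly if it previously held $1$ (so it now holds exactly $2$), and otherwise no vertex is big. By induction on the number of moves, at every moment there is at most one big vertex, and it holds $2$ pebbles (or $3$, only at the start). Consequently, the first time a pebble is delivered to $w$ — which requires a move from a big neighbor of $w$ — that source becomes non-big and, by the invariant, no big vertex remains; so $w$ accumulates at most one pebble and can never itself move. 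Since every neighbor of a vertex of $V(H)$ lies in $V(H)\cup\{w\}$, and the vertices of $V(H)\setminus\{r\}$ each start with one pebble while $r$ and $w$ start empty and $w$ is never big, an easy induction on moves shows that no vertex of $V(H)$ is ever big and $p(r)$ remains $0$. Hence $p$ is $r$-unsolvable.

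This establishes that every Class~0 graph is free of cut-vertices. For $\delta(G)\ge 2$: a vertex of degree $0$ forces $n=1$, and if some vertex $v$ has degree exactly $1$ with $n\ge 3$, then its unique neighbor disconnects $v$ from the rest, i.e.\ is a cut-vertex, contradicting the previous paragraph; so $\delta(G)\ge 2$ whenever $n\ge 3$, which is exactly the range of interest (diameter at least $2$ forces $n\ge 3$), the cases $n\le 2$ being the excluded diameter $0$ and diameter $1$ graphs.

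The only delicate point is the invariant in the second paragraph: one must resist the temptation to ``reuse'' the $|S|+1$ unit pebbles placed on $S$, and the invariant formalizes the fact that isolated single pebbles can never be merged, so that, as far as reaching $w$ is concerned, the whole configuration on $S$ behaves like a single stack of $3$. Everything else is bookkeeping. (One could also try to phrase the reachability bound for $w$ via a weight function on $G[S\cup\{w\}]$, but the naive choice $2^{-d(\cdot,w)}$ is too weak when many vertices of $S$ lie close to $w$, so the combinatorial invariant is the cleaner route.)
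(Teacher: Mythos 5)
Your proof is correct and takes essentially the same approach as the paper's: the same configuration (three pebbles on one side of the cut-vertex, zero on the cut-vertex and on the target, one pebble everywhere else), with your ``at most one big vertex'' invariant simply formalizing the paper's observation that once a pebble reaches the cut-vertex, every vertex holds at most one pebble and no further moves are possible. The only cosmetic difference is that the paper places the three pebbles on a neighbor of the cut-vertex and targets a neighbor on the other side, whereas you allow both to be arbitrary vertices of their respective components.
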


\begin{proof}
Let $G$ be a graph with a cut-vertex $u$ and neighbors $v_1$ and $v_2$ that are
in different components of $G-u$.  Consider the distribution $p$ with 3 pebbles
on $v_1$, 0 pebbles on each of $u$ and $v_2$, and 1 pebble on each other vertex.
Distribution $p$ has $|V(G)|$ pebbles, but no pebble can reach $v_2$, which we
now show.  If a pebble ever moves to $u$, then at that point each vertex has at
most one pebble, and $v_2$ has no pebbles.  Otherwise, every pebbling move is
within the component of $G-u$ containing $v_1$, so no pebble reaches $v_2$.
Thus no pebble can reach $v_2$, so $G$ is not Class 0.
\end{proof}

\begin{lem}
\label{lem:delta2}
If an $n$-vertex Class 0 graph $G$ has diameter at least 3 and $\delta(G) = 2$, then
$e(G)\ge \frac53n-\frac{11}3$.
\end{lem}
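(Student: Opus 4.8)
The plan is to count edges via a discharging-style argument that exploits the \hyperref[smallNeighborhood]{Small Neighborhood Lemma}. Since $\delta(G)=2$, the obstruction to having few edges is an abundance of $2$-vertices and $3$-vertices, and the Small Neighborhood Lemma forces such low-degree vertices to cluster near each other. First I would set up the standard inequality $2e(G)=\sum_{v}d(v)$, and assign to each vertex $v$ an initial charge $d(v)-\frac{10}{3}$; the total charge is then $2e(G)-\frac{10}{3}n$, and proving $e(G)\ge\frac53 n-\frac{11}{3}$ amounts to showing that the total charge is at least $-\frac{22}{3}$, i.e. that the total deficiency coming from $2$- and $3$-vertices is bounded by a constant. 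Vertices of degree $\ge 4$ have nonnegative charge; a $3$-vertex has charge $-\frac13$ and a $2$-vertex has charge $-\frac43$, so the whole game is to ``pay for'' these from nearby high-degree vertices or from the diameter-$\ge 3$ structure.

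The key structural input is this: by the first part of the Small Neighborhood Lemma, any two $2$-vertices are within distance $2$ of each other, so all $2$-vertices lie in a single ball of radius $2$; moreover every $2$-vertex is within distance $2$ of \emph{every} other vertex of degree $\le 3$ that is ``far'' in the relevant sense — more precisely, if there is a $2$-vertex $u$, then every vertex at distance $\ge 3$ from $u$ has degree $\ge 4$, so $G$ decomposes into a bounded-size ``core'' $B_2(u)$ (the ball of radius $2$ about $u$) containing all the low-degree vertices, together with vertices all of degree $\ge 4$. I would then bound $e(G)$ from below by counting: each of the $\ge 4$-vertices outside the core contributes $\ge 4$ to the degree sum, and the core, being of bounded size, contributes a bounded (negative) deficiency that I can check by hand is at most the slack $\frac{11}{3}$ allows. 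Because the diameter is at least $3$ I get a vertex $w$ with $d(u,w)\ge 3$, which both guarantees $d(w)\ge 4$ and keeps the core from being all of $G$; I would use the $\frac53$ versus $\frac32$ gap (the extra $\frac16 n$) to absorb the constant loss in the core. The second part of the Small Neighborhood Lemma is needed to control $3$-vertices whose neighbors are all $3$-vertices: such a $3$-vertex must be within distance $3$ of every vertex, again forcing a bounded-size cluster.

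The cleanest way to organize this is probably case analysis on whether $G$ has a $2$-vertex. If $G$ has \emph{no} $2$-vertex then $\delta(G)=3$ is false in this lemma's hypothesis, wait — the hypothesis is $\delta(G)=2$, so there \emph{is} a $2$-vertex $u$; all $2$-vertices and all ``bad'' $3$-vertices lie in $B_2(u)$, a set $S$ with $|S|\le 1+2+4=7$ if $u$ has degree exactly $2$ (more care is needed since interior vertices can have large degree, but the set of \emph{low-degree} vertices in $S$ is what matters and it is small). Outside $S$ every vertex has degree $\ge 4$, except possibly $3$-vertices all of whose neighbors have degree $\ge 4$ — and a $3$-vertex with a $\ge 4$-neighbor can be charged $\frac13$ from that neighbor (a $\ge 5$-vertex has surplus $\ge\frac53$, enough). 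I would make the discharging rule: every vertex of degree $\ge 4$ sends $\frac13$ across each incident edge to a $3$-neighbor, and separately handle the bounded cluster $S$ by brute force. The main obstacle I anticipate is the bookkeeping for $3$-vertices \emph{all} of whose neighbors are also $3$-vertices but which are \emph{not} covered by the second Small Neighborhood statement because they happen to lie close to $u$; these are confined to a bounded region and so contribute only a constant, but verifying that the constant genuinely fits inside $-\frac{22}{3}$ (rather than, say, requiring $-\frac{11}{3}$) is the delicate part and likely forces the somewhat unusual constant $\frac{11}{3}$ in the statement. I would finish by checking the extremal arithmetic: if the core has $k$ low-degree vertices with total deficiency $D$ and there are $n-k$ vertices of degree $\ge 4$ contributing nonnegatively after discharging, then $2e(G)\ge \frac{10}{3}n - (\text{constant})$, and pinning the constant to $\frac{22}{3}$ gives exactly $e(G)\ge\frac53 n - \frac{11}{3}$.
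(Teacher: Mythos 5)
Your overall framing agrees with the paper's: give each vertex charge $d(v)$, aim for a final charge of $\frac{10}{3}$ per vertex with total deficit at most $\frac{22}{3}$, and use the first part of the \hyperref[smallNeighborhood]{Small Neighborhood Lemma} with $u$ a $2$-vertex to force $d(v)\ge 4$ for every $v$ at distance at least $3$ from $u$. But the load-bearing step of your plan --- that the low-degree vertices form a \emph{bounded-size} cluster $B_2(u)$ that can be ``handled by brute force'' --- is false. The two neighbors of $u$ may have arbitrarily large degree, so $N_2$ can contain arbitrarily many $2$- and $3$-vertices (for instance, many $2$-vertices each adjacent to both neighbors of $u$; attach a path to get diameter $\ge 3$). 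Each such $2$-vertex carries deficiency $\frac43$, so the total deficiency inside $B_2(u)$ grows linearly and cannot be absorbed into the constant $\frac{22}{3}$. It must instead be paid for by the large degrees of the two vertices in $N_1$, which requires an explicit discharging rule moving charge from $N_1$ into $N_2$; this is exactly what the paper does (each vertex of $N_2$ takes charge $1$ from a neighbor in $N_1$, and a $2$-vertex of $N_2$ takes an additional $\frac13$ from its other neighbor), and it is absent from your sketch.

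Two further problems. First, your rule ``every vertex of degree $\ge 4$ sends $\frac13$ across each incident edge to a $3$-neighbor'' is not locally sound: a $4$-vertex has surplus only $4-\frac{10}{3}=\frac23$ but could send up to $\frac43$. Second, your rules send nothing to $2$-vertices, each of which needs $\frac43$. The genuinely delicate part of the correct proof is not the $3$-vertices at all but the $4$-vertices in $N_{3^+}$: they have surplus only $\frac23$, yet may be forced to give $\frac13$ to several adjacent $2$-vertices of $N_2$. The paper handles this by analyzing the components of the subgraph induced by these $4$-vertices (components containing a cycle, or with a $3^+$-neighbor, break even; tree components all of whose neighbors are $2$-vertices are shown, via the Small Neighborhood Lemma and the no-cut-vertex property, to be limited to a single exceptional vertex of deficiency $\frac23$). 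Your proposal does not engage with this interaction, and it is where the constant $\frac{22}{3}$ actually comes from. As written, the argument does not go through.
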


\begin{proof}
Let $G$ be an $n$-vertex Class 0 graph with diameter at least 3 and
$\delta(G)=2$.
We assign each vertex $v$ a charge $\ch(v)$, where $\ch(v)=d(v)$.
Now we redistribute these charges, without changing their sum, so that all but a few
vertices finish with charge at least $\frac{10}3$; the charge of each vertex
$v$ after redistributing is $\ch^*(v)$. If at most $k$ vertices finish with
charge less than $\frac{10}3$ (but all charges are nonnegative), then $e(G) =
\frac12\sum_{v\in V}\ch(v) =
\frac12\sum_{v\in V}\ch^*(v) \ge \frac12(\frac{10}3(n-k)) = \frac53n-\frac53k$.

%

Choose $r\in V(G)$ such that $d(r)=2$.  For each positive integer $i$, let $N_i$
denote the set of vertices at distance $i$ from $r$.
Also, let $N_{3^+}=\bigcup_{i\ge 3}N_i$.
By the \hyperref[smallNeighborhood]{Small Neighborhood Lemma}
with $u=r$, if $v\in N_{3^+}$, then $d(v)\ge 4$.

We redistribute charge according to the following two discharging rules.
\begin{enumerate}
\item Each vertex $v\in N_2$ takes charge 1 from some neighbor in $N_1$.
If $d(v)=2$, then $v$ also takes charge $\frac13$ from its other neighbor.
\item Each vertex $v\in N_{3^+}$ with $d(v)=4$ takes charge $\frac13$ from each
neighbor $u$ with $d(u)\ge 3$.
\end{enumerate}

We show that nearly all vertices finish with charge at least $\frac{10}3$.
Consider a vertex $v\in V(G)\setminus N[r]$.  If $d(v)\ge 5$, then $\ch^*(v)\ge
d(v)-\frac13d(v)=\frac23d(v) \ge \frac{10}3$.  Now suppose $v\in N_2$ and
$d(v)\ge 3$.  In this case, $\ch^*(v)\ge d(v)+1-\frac13(d(v)-1) =
\frac23d(v)+\frac43 \ge \frac{10}3$.  Suppose instead that $v\in N_2$,
$d(v)=2$, and either $v$ has both neighbors in $N_1$ or the neighbor of $v$ outside
of $N_1$ has degree at least 3.  Now $\ch^*(v)= d(v)+\frac43=\frac{10}3$.

We show that $G$ has at most two 2-vertices in $N_2$ with 2-neighbors in $N_2$.
 Suppose, to the contrary, that $u_1$, $u_2$, and $u_3$ are 2-vertices in
$N_2$, each with a 2-neighbor in $N_2$; by symmetry, assume
$u_1u_2\in E(G)$.
By Lemma~\ref{lem:NoCut}, $u_1$ and $u_2$ cannot have a common neighbor $v\in N_1$,
since then $v$ would be a cut-vertex.  Thus, $u_1$ and $u_2$ have distinct
neighbors in $N_1$.  However, now $u_3$ is distance three from either $u_1$ or $u_2$;
by symmetry, say $u_1$.  Now $u_1$ and $u_3$ contradict the 
\hyperref[smallNeighborhood]{Small Neighborhood Lemma}.
So indeed $N_2$ has at most two 2-vertices with 2-neighbors in $N_2$.

Now we consider 4-vertices in $N_{3^+}$.  Rather than compute the charges of
these 4-vertices individually, we group them together as follows.  Let $H$ be
the subgraph induced by 4-vertices in $N_{3^+}$, and let $H_1$ be a component
of $H$ with $k$ vertices.  If $H_1$ contains a cycle, then $H_1$ contains at
least $k$ edges, so vertices of $H_1$ give charge to at most $4k-2(k)=2k$
vertices outside $H_1$.  Thus, $\ch^*(H_1)\ge \ch(H_1)-2k(\frac13)=4k-\frac{2k}3
= \frac{10}3k$.  Similarly, if $H_1$ has some adjacent vertex that is not a
2-vertex, then $\ch^*(H_1)\ge \ch(H_1)-(2k+1)(\frac13)+\frac13 = \frac{10}3k$. 
Instead, assume that $H_1$ is a tree and every vertex adjacent to $H_1$ is a
2-vertex.  Recall that each such 2-vertex is in $N_2$.

If every 2-neighbor of $H$ is adjacent to the
same vertex of $N_1$, call it $v$, then $v$ is a cut-vertex.  Thus, $H_1$ has
2-neighbors that are adjacent to both vertices of $N_1$; call these 2-neighbors
$u_1$ and $u_2$.  By the \hyperref[smallNeighborhood]{Small Neighborhood Lemma},
 every pair of 2-vertices in
$N_2$ are adjacent or have a common neighbor.  Since $u_1$ and $u_2$ are both
adjacent to $H_1$, they can't be adjacent to each other; thus, they must have a
common neighbor, $u_3$.  Further, every 2-vertex in $N_2$ must be adjacent to
$u_3$.  Since $u_3\in V(H_1)$, $u_3$ is a 4-vertex, so $N_2$ has at most four
2-vertices.  Thus, $H_1$ is the only component of $H$ with final charge less
than $\frac{10}3$ times its size.  Furthermore, $H_1$ has only a single vertex,
and $\ch^*(H_1)=4-4(\frac13)=\frac83$. 

Now we compute the total final charge of $V(G)$.  For each vertex $v$ not in $H$, the
final \emph{excess} of $v$ is $\ch^*(v)-\frac{10}3$.  For each component $H_i$ of $H$
with order $k$, the final excess is $\ch^*(H_i)-\frac{10}3k$.  We now show
that the sum of all final excesses is greater than or equal to $-\frac{22}3$,
which proves the lemma.

If $v\in N_{3^+}$ and $d(v)\ge 5$, then $\ch^*(v)\ge \frac{10}3$, so $v$
has nonnegative excess.  Each component of $H$, other than (possibly) $H_1$,
has nonnegative excess.  Further,
$H_1$ has excess
greater than or equal to $-\frac23$.  Each $v\in N_2$ with $d(v)\ge 3$ has
nonnegative excess.
Also, each $v\in N_2$ with $d(v)=2$ has excess 0, except for
at most two adjacent 2-vertices, which each have excess $-\frac13$.
Finally, the sum of the final charges on $N[r]$ is at least $4$
(since $r$ takes no charge from $N(r)$).  Thus, the sum of excesses of
$N[r]$ is at least $4-3(\frac{10}3)=-6$.  So the sum of excesses over all
vertices is at least $2(-\frac13)+(-\frac23)+(-6)=-\frac{22}3$.  Thus $\sum_{v\in
V(G)}d(v)\ge \frac{10}3n-\frac{22}3$, so $e(G)\ge \frac53n-\frac{11}3$.
\end{proof}

Now we prove our main theorem of this section.

\begin{thm}
\label{thm:delta3}
If $G$ is an $n$-vertex Class 0 graph with diameter at least 3, then $e(G)\ge
\frac53n-\frac{11}3$.
\end{thm}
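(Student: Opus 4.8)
The plan is to split on the minimum degree and reduce to $\delta(G)=3$. By Lemma~\ref{lem:NoCut}, $\delta(G)\ge 2$; if $\delta(G)=2$ then Lemma~\ref{lem:delta2} already gives $e(G)\ge\frac{5}{3}n-\frac{11}{3}$, and if $\delta(G)\ge 4$ then $2e(G)=\sum_{v}d(v)\ge 4n$, so $e(G)\ge 2n\ge\frac{5}{3}n-\frac{11}{3}$. Since $\delta(G)\ge 3$ gives $e(G)\ge\frac{3}{2}n$, and $\frac{3}{2}n\ge\frac{5}{3}n-\frac{11}{3}$ exactly when $n\le 22$, we may assume $\delta(G)=3$ and $n\ge 23$. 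In particular $G$ is not $3$-regular: if it were, then, since its diameter is at least $3$, it would have diameter exactly $3$ (diameter at least $4$ being excluded by the second statement of the \hyperref[smallNeighborhood]{Small Neighborhood Lemma}), hence at most $1+3+6+12=22$ vertices by the Moore bound, contradicting $n\ge 23$.

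For the remaining case I would run a discharging argument in the style of the proof of Lemma~\ref{lem:delta2}: set $\ch(v)=d(v)$, redistribute conserving the total, and aim for $\ch^*(v)\ge\frac{10}{3}$ with total leftover deficit at most $\frac{22}{3}$, so that $2e(G)=\sum_{v}\ch^*(v)\ge\frac{10}{3}n-\frac{22}{3}$. A $3$-vertex is short by $\frac{1}{3}$, a $4$-vertex has surplus $\frac{2}{3}$, and a vertex of degree at least $5$ has surplus at least $\frac{5}{3}$; the guiding rule is that each $3$-vertex collects a total of $\frac{1}{3}$ from its neighbours of degree at least $4$, preferring neighbours of degree at least $5$. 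Two situations block this: first, a $3$-vertex all of whose neighbours are $3$-vertices has no source; second, a degree-$4$ vertex adjacent to three or four $3$-vertices cannot feed all of them from its surplus $\frac{2}{3}$.

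The first obstruction is where the \hyperref[smallNeighborhood]{Small Neighborhood Lemma} does the real work. Let $B$ be the set of $3$-vertices all of whose neighbours are $3$-vertices. If $w\in B$, then applying the second statement of the lemma with $w$ in the role of its vertex $v$ shows that $G$ has no $3$-vertex at distance at least $4$ from $w$, so every $3$-vertex lies in the ball of radius $3$ about $w$. Since $w$ and its three neighbours are $3$-vertices, the layers $N_1(w)$ and $N_2(w)$ contain at most $3$ and $6$ vertices, and only $N_3(w)$ can be large; a short count then bounds the number of $3$-vertices by an absolute constant plus $\sum_{v:\,d(v)\ge 5}(d(v)-4)$, which is itself surplus available for discharging. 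When $G$ has diameter at least $4$ this even forces $B=\emptyset$ unless $G$ lies entirely in that ball; when $G$ has diameter exactly $3$ the second statement is vacuous, but then the same Moore-type bound (each unit of degree-excess enlarges the relevant ball only boundedly) keeps the number of uncovered $3$-vertices bounded, and the truly small cases fall to $e(G)\ge\frac{3}{2}n$.

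For the second obstruction I would, just as Lemma~\ref{lem:delta2} does with its $4$-vertices, stop charging individual vertices and instead group each $3$-vertex with the connected component $H_1$ of $3$-vertices containing it. If some vertex of $H_1$ has all its neighbours in $H_1$ then that vertex lies in $B$ and the first obstruction has already pinned down the configuration; otherwise every vertex of $H_1$ has at most two neighbours in $H_1$, so $H_1$ is a path or a cycle and hence has at least $|V(H_1)|$ edges leaving to vertices of degree at least $4$, enough to collect the required $\frac{|V(H_1)|}{3}$. I expect the genuine obstacle to be the donor side: a degree-$4$ vertex adjacent to several such components must still be shown not to be overdrawn, and at the same time the combined deficit from $N[r]$, from the exceptional $3$-vertices of the first obstruction, and from the exceptional components of the second must be squeezed down to exactly $\frac{22}{3}$. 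Unlike in Lemma~\ref{lem:delta2}, both the donors (degree $4$) and the receivers (degree $3$) here sit right at the critical ratio $\frac{10}{3}$, so the component-grouping rule, the routing of charge to the neighbours of degree-$4$ vertices, and the diameter-$3$ Moore count all have to be tuned against one another to hit the stated constant.
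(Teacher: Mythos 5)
Your reduction to $\delta(G)=3$ is exactly the paper's (Lemma~\ref{lem:NoCut} for $\delta\le 1$, Lemma~\ref{lem:delta2} for $\delta=2$, the trivial degree bound for $\delta\ge4$), and your side observations --- that one may assume $n\ge 23$ and that a $3$-regular Class~0 graph of diameter at least $3$ has at most $22$ vertices --- are correct. But the heart of the theorem, the case $\delta(G)=3$, is left as a plan rather than a proof, and the plan has a genuine gap that you yourself flag: the donor-side accounting. A $4$-vertex has surplus only $\frac23$ over the target $\frac{10}3$ but can be adjacent to three or four $3$-vertices (indeed to $3$-vertices lying in cycle components of the $3$-vertex subgraph, each of whose outgoing edges must carry a full $\frac13$), so your local rule overdraws it, and nothing in your sketch rules this out or bounds the resulting global deficit by $\frac{22}3$. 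Worse, your only tool for the first obstruction --- the second statement of the \hyperref[smallNeighborhood]{Small Neighborhood Lemma} --- is, as you note, vacuous when $G$ has diameter exactly $3$, and that is precisely the regime where a Class~0 graph could a priori be ``locally $3$-regular'' almost everywhere yet arbitrarily large (via a few high-degree vertices). The appeal to ``the same Moore-type bound'' there is not an argument: the ball of radius $3$ around a vertex of $B$ can contain linearly many $3$-vertices once $N_2$ contains high-degree vertices, and you never show how their $\frac13$-deficits are paid for.

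The paper avoids all of this by abandoning local, degree-based discharging in favor of a discharging organized around a single root: choose $r$ to be a $3$-vertex minimizing $|N_2|$. If $|N_2|\ge 8$, the minimality forces \emph{every} $3$-vertex to have either a $5^+$-neighbor or two $4$-neighbors, so the local rule (take $\frac16$ from each $4$-neighbor, $\frac13$ from each $5^+$-neighbor) succeeds with no exceptional vertices at all and gives $e(G)\ge\frac53n$. If $|N_2|\le 7$, a pebbling argument tailored to this root (a $15$-pebble configuration on a ball of size at most $15$) shows every vertex at distance at least $4$ from $r$ has degree at least $4$; then each $N_2$-vertex takes $1$ from a neighbor in $N_1$ and each $N_3$-vertex takes $\frac13$ from a neighbor in $N_2$, the three vertices of $N_1$ absorb the entire deficit, and the sum of final charges is at least $\frac{10}3(n-4)+6$. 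This works uniformly whether the diameter is $3$ or larger, and it is where the constant $\frac{22}3$ actually comes from. If you want to salvage your approach, the missing idea is precisely this root-centered layering together with the $|N_2|\ge 8$ versus $|N_2|\le 7$ dichotomy; without it, your argument does not close.
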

\begin{proof}
Let $G$ be Class 0 with diameter at least 3.  By Lemma~\ref{lem:NoCut},
$\delta(G)\ge 2$.  Lemma~\ref{lem:delta2} proves the bound when $\delta(G)=2$.
If $\delta(G)\ge 4$, then $e(G) \ge \frac{\delta(G)n}2\ge 2n$.  Thus, we assume
that $\delta(G)=3$.

The proof is similar to that of Lemma~\ref{lem:delta2}, but easier.
Recall that a \emph{$k$-vertex} is a vertex of degree $k$.  Similarly, a
\emph{$k^+$-vertex} has degree at least $k$ and a \emph{$k$-neighbor} of a vertex
$v$ is a $k$-vertex adjacent to $v$.
Choose $r$ to be a 3-vertex with as few vertices at distance 2 as possible.
For each integer $i$, let $N_i$ denote the set of vertices at distance
$i$ from $r$.  Also, let $N_{4^+}=\bigcup_{i\ge 4}N_i$.
We first handle the case $|N_2|\ge 8$, which is short.

\setcounter{claim}{0}
\begin{claim}
If $|N_2|\ge 8$, then $e(G)\ge \frac53n$.
\end{claim}

Since $r$ was chosen among all 3-vertices to minimize $N_2$, each 3-vertex has
either a $5^+$-neighbor or at least two 4-neighbors.  Thus, we let $\ch(v)=d(v)$
and use the following discharging rule.

\begin{enumerate}
\item Each 3-vertex takes $\frac16$ from each 4-neighbor and $\frac13$
from each $5^+$-neighbor.
\end{enumerate}

If $d(v)\ge 5$, then $\ch^*(v)\ge d(v)-\frac13d(v)=\frac23d(v)\ge\frac{10}3$.
If $d(v)=4$, then $\ch^*(v)\ge d(v)-\frac16d(v)=4-\frac46=\frac{10}3$.
If $d(v)=3$, then $\ch^*(v)\ge 3+\frac13=\frac{10}3$ or $\ch^*(v)\ge 3+\frac26=\frac{10}3$.
Hence, $e(G)=
\frac12\sum_{v\in V(G)}\ch(v)=
\frac12\sum_{v\in V(G)}\ch^*(v)\ge \frac53n$.  This proves the claim.
\bigskip

Hereafter, we assume that $|N_2|\le 7$.  Now a variation on the 
\hyperref[smallNeighborhood]{Small Neighborhood Lemma}
implies that $d(v)\ge 4$ for each
vertex $v\in N_{4^+}$.  Suppose instead that $d(v)= 3$ for some vertex $v\in
N_{4^+}$.  Let $p$ be the configuration with 15 pebbles on $r$, 0 pebbles on
each vertex in $N_1\cup N_2\cup N[v]$, and 1 pebble on each other vertex.  Since
$|\{r\}\cup N_1\cup N_2\cup N[v]|\le 15$, the configuration has at least $n$
pebbles, but no pebble can reach $v$, since at most one pebble can leave
$N[r]\cup N_2$.  This contradicts that $G$ is Class 0.  Thus, $d(v)\ge 4$ for
each $v\in N_{4^+}$.

Now we again redistribute charge.  We let $\ch(v)=d(v)$ and we use the
following two discharging rules.

\begin{enumerate}
\item Each vertex in $N_2$ takes charge 1 from its neighbor in $N_1$.
\item Each vertex in $N_3$ takes charge $\frac13$ from its neighbor in $N_2$.
\end{enumerate}

We show that each vertex in $V(G)\setminus N[r]$ finishes with charge at least
$\frac{10}3$.  If $v\in N_{4^+}$, then $\ch^*(v)=\ch(v)=d(v)\ge 4$.
If $v\in N_3$, then $\ch^*(v)\ge d(v)+\frac13\ge\frac{10}3$.  If $v\in N_2$, then
$\ch^*(v)\ge d(v)+1-\frac13(d(v)-1)=\frac23d(v)+\frac43\ge\frac{10}3$.  The
total charge on vertices of $\{r\}\cup N_1$ is $3+3(1)=6$.  Thus, the sum of all
final charges is at least $\frac{10}3(n-4)+6 = \frac{10}3n-\frac{22}3$.  Thus,
$e(G)\ge \frac53n-\frac{11}3$.
\end{proof}

\subsection{Diameter 2}

We now prove that every $n$-vertex diameter 2 Class 0 graph $G$ has at least $2n-5$
edges.  This bound is best possible.  Before proving this result, we describe
some graphs where equality holds.  In what follows, we show that these are the
\emph{only} graphs where equality holds.
To begin, we need the following lemma.

\begin{lem}
Given a graph $G$ and a vertex $v\in V(G)$, form $G'$ from $G$ by adding a new
vertex, $v'$, with $N(v')=N(v)$.  If $G$ is Class 0, then $G'$ is also Class 0.
\label{lem:clone}
\end{lem}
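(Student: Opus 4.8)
The plan is to show that for any target root $z\in V(G')$ and any configuration $p$ on $G'$ with $|p| = |V(G')| = |V(G)|+1$, we can move a pebble to $z$. I would reduce this to solvability in $G$, using the fact that $v$ and $v'$ are twins (same neighborhood) and that $\pi(G) = |V(G)|$. The key structural observation is that $v$ and $v'$ are interchangeable: any pebbling move into or out of $v'$ can be mimicked by a move into or out of $v$, since $N(v') = N(v)$ and $v \not\sim v'$. So the only real content is bookkeeping about how pebbles on $\{v,v'\}$ combine.

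First I would dispose of the case where the root $z \notin \{v,v'\}$. Define a configuration $q$ on $G$ by setting $q(u) = p(u)$ for all $u \notin \{v,v'\}$ and $q(v) = p(v) + p(v')$, collapsing the twin pair onto $v$. Then $|q| = |p| - 1 = |V(G)|$, so $q$ is solvable for $(G,z)$ since $G$ is Class 0. I then need to check that a solution for $q$ in $G$ lifts to a solution for $p$ in $G'$: whenever the $G$-solution makes a move out of $v$, we can make the corresponding move in $G'$, pulling pebbles off $v$ first and, once $v$ is exhausted, off $v'$ (possibly ferrying a pebble from $v'$ back through a common neighbor to $v$, which costs nothing net because we only need the aggregate count $p(v)+p(v')$, exactly what $q(v)$ records — more carefully, one shows by a simple induction that the number of pebbles the $G$-strategy ever removes from $v$ is at most $q(v)$, and each such removal of two pebbles can be realized in $G'$ by removing two pebbles from the multiset sitting on $\{v,v'\}$, routing through a neighbor if they are split one-and-one). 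Moves not touching $v$ are identical in both graphs.

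For the case $z \in \{v,v'\}$, say $z = v'$ by symmetry of the twins, I would instead collapse onto $v'$: consider $G$ with root $v$ and the configuration $q$ on $G$ with $q(u) = p(u)$ for $u \notin \{v,v'\}$ and $q(v) = p(v) + p(v')$. Again $|q| = |V(G)|$, so $q$ solves to $v$ in $G$; lifting this to $G'$ produces a pebble on $v$ or on $v'$ (the final move deposits onto $v$ in $G$, but in $G'$ that move comes from a common neighbor of $v$ and $v'$, so we may instead send it to $v'$), which is what we want. Thus $\pi(G',v') = |V(G')|$ for every root, so $G'$ is Class 0.

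The main obstacle — and the only place requiring care — is making the "lifting" argument rigorous: a pebbling solution in $G$ is a sequence of moves, and when it removes pebbles from $v$, the pebbles in $G'$ may be distributed across $v$ and $v'$ in a way that forces an extra transfer move through a common neighbor, which consumes two pebbles to produce one. I would handle this by strengthening the induction hypothesis: track not individual configurations but the potential $\sum_u 2^{-d(u,z)} p(u)$ (or simply argue via the well-known fact that twin vertices can be "merged" without changing solvability), showing that the split-across-twins configuration $p$ dominates, in the pebbling order, the merged configuration $q$ viewed on $G'$ with all of $q(v)$ placed on $v$. Since solvability is monotone under $\ge$ in each coordinate and $p \to (\text{merge } v' \text{ into } v)$ is itself a (possibly trivial, possibly one-step-with-loss) reduction that still leaves at least $q(v)$ on $v$ when $p(v')$ is even and at least $q(v)-1$ when odd — and in the odd case the leftover single pebble on $v'$ can be used directly if $z=v'$ — the bound goes through. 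I expect the cleanest writeup to invoke the standard "smoothing onto a representative of a twin class" principle rather than re-deriving it.
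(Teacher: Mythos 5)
Your overall strategy (merge the twins and reduce to solvability in $G$) is the same as the paper's, but the specific reduction you propose has a genuine gap at exactly the point you flag and then wave away. Setting $q(v)=p(v)+p(v')$ does not permit the lifting: if $p(v)$ and $p(v')$ are both odd, then $\lfloor (p(v)+p(v'))/2\rfloor = \lfloor p(v)/2\rfloor+\lfloor p(v')/2\rfloor+1$, so the $G$-strategy may make one more move out of $v$ than can be realized in $G'$, and the ``ferry through a common neighbor'' repair costs two pebbles to deliver one, which you cannot afford. The ``well-known fact'' you invoke --- that twins can be merged without changing solvability --- is false in the direction you need: take $v,v'$ both adjacent to the root $z$ with one pebble on each; the merged configuration (two pebbles on $v$) is solvable, the split one is not. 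The potential $\sum_u 2^{-d(u,z)}p(u)$ is only a necessary condition for solvability, so tracking it cannot close this gap either. The paper's fix is to define $q(v)=\max(p(v)+p(v')-1,0)$, deliberately sacrificing one pebble; then $|q|\ge |V(G)|$ still holds (since $|p|=|V(G)|+1$), and $\lfloor (a+b-1)/2\rfloor\le\lfloor a/2\rfloor+\lfloor b/2\rfloor$ for all nonnegative integers $a,b$, so every move the $G$-strategy makes from $v$ can be charged to $v$ or to $v'$ in $G'$. (Also note your arithmetic $|q|=|p|-1$ is inconsistent with your own definition of $q$.)

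The root case $z\in\{v,v'\}$ is also not handled correctly. With root $v'$ you may assume $p(v')=0$, and then your merged configuration has $q(v)=p(v)$ sitting on the root $v$ of $G$, so the $G$-solution is vacuous and lifts to nothing: a single pebble on $v$ in $G'$ is useless for reaching $v'$. The paper instead reduces to $p(v)=0$ and $p(v')\le 3$ (four pebbles on $v'$ reach $v$ through a common neighbor), handles $p(v')\le 1$ by the Case-1 argument, and for $p(v')\in\{2,3\}$ runs a separate argument: it discards the pebbles on $v'$, plants one pebble each on two empty neighbors $u_1,u_2$ of $v$ (keeping at least $|V(G)|$ pebbles), solves in $G$, and if the solution ever moves from $u_1$ or $u_2$ it truncates there and instead routes a pebble from $v'$ through that neighbor to the root. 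Some argument of this shape is unavoidable, and your proposal does not contain it.
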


\begin{proof}
Let $G$ be Class 0, and form $G'$ from $G$ as in the lemma.  We show that
$G'$ is Class 0.  Let $p'$ be a configuration of size $|V(G')|$ on $G'$ and 
$r$ be a target vertex in $G'$.

First suppose that $r\notin \{v,v'\}$.  
We form configuration $p$ for $G$ as follows.  
Let $p(w)=p'(w)$ for all $w\in V(G)\setminus\{v\}$, and let
$p(v)=\max(p'(v)+p'(v')-1,0)$.  Now $|p|\ge |V(G)|$, so $r$ is reachable from
$p$ in $G$; let $\sigma$ be a pebbling sequence that reaches $r$ in $G$. If
$\sigma$ reaches $r$ from $p'$ in $G'$, then we are done.  Otherwise, $v$ must
make more moves in $\sigma$ in $G$ from $p$ than are possible in $G'$ from $p'$.
Now all of these ``extra'' moves from $v$ can be made instead from $v'$ (precisely
because $p(v) = p'(v)+p'(v')-1$).  Thus $r$ is reachable in $G'$ from $p'$.

Suppose instead that $r\in \{v,v'\}$; by symmetry, assume that $r=v$.  We may
assume that $p'(v)=0$ and $p'(v')<4$.  If $p'(v')\le 1$, then we can proceed as
in the previous paragraph.  So assume that $p'(v')\in\{2,3\}$.  Since $G$ is
Class 0, Lemma~\ref{lem:NoCut} implies that $d(v)\ge 2$.  Choose $u_1,u_2\in
N(v)$.  Since $p(v')\in \{2,3\}$, we can assume that $p(u_1)=p(u_2)=0$.  We form
$p$ for $G$ as follows.  Let $p(w)=p'(w)$ for all $w\in
V(G)\setminus\{u_1,u_2\}$ and $p(u_1)=p(u_2)=1$.  Now $|p|\ge |V(G)|$, so $v$ is
reachable from $p$ in $G$; let $\sigma$ be a pebbling sequence that reaches $v$
in $G$.  If $\sigma$ makes no moves from $u_1$ or $u_2$, then $\sigma$ also
reaches $v$ from $p'$ in $G'$.  So assume that $\sigma$ makes a move from $u_1$
or $u_2$.  Form $\sigma'$ from $\sigma$ by truncating $\sigma$ just before the
first time that it moves from $u_1$ or $u_2$, say $u_1$, and then appending a move
from $v'$ to $u_1$ and a move from $u_1$ to $v$.  Now $\sigma'$ reaches $v$ from
$p'$ in $G$.  Thus, $G'$ is Class 0.
\end{proof}

Now we use Lemma~\ref{lem:clone} to show that two infinite families of graphs are
all Class 0.

\begin{figure}[!t]
\begin{center}
\renewcommand{\ttdefault}{phv}
\begin{tikzpicture}[rotate=18,scale=.5, font=\sffamily]
\tikzstyle{vertex style}=[shape = circle, minimum size = 7.75pt, inner sep = 0.6pt, draw]
    \begin{scope} 
\fontsize{8pt}{12pt}\selectfont
      \path \foreach \i/\name in {0/{2,4},1/{r},2/{1,4},3/{2,5},4/{1,3}}{%
       (72*\i:2.5) node[vertex style] (b\i) {}
}
       ; 
      \foreach \i/\name in {0/{2},1/{4},2/{r},3/{},4/{1}}{%
      \draw (72*\i:3.15) node[] (c\i) {\scriptsize{\texttt{\name}}};
}
\draw[line width=.35em] (b2.center) -- (b1.center) -- (b0.center) -- (b4.center);
\draw (72:2.5) node[vertex style, fill=white] (d1) {};
\draw (144:2.5) node[vertex style, fill=white] (d2) {};
\draw (216:2.5) node[vertex style, fill=white] (d3) {};
\draw (288:2.5) node[vertex style, fill=white] (d4) {};
\draw (360:2.5) node[vertex style, fill=white] (d1) {};
    \end{scope}
     \begin{scope} [edge style/.style={line width=.8pt}]
       \foreach \i  in {0,...,4}{%
       \pgfmathtruncatemacro{\nextb}{mod(\i+1,5)}
       \draw[edge style] (b\i)--(b\nextb);
       }  
     \end{scope}
\begin{scope}[xshift=3.2in,yshift=-1.02in]
\fontsize{8pt}{12pt}\selectfont
      \path \foreach \i/\name in {0/{2,4},1/{r},2/{1,4},3/{2,5},4/{1,3}}{%
       (72*\i:2.5) node[vertex style] (b\i) {}
}
       ; 
      \foreach \i/\name in {0/{1},1/{ },2/{r},3/{4},4/{2}}{%
      \draw (72*\i:3.15) node[] (c\i) {\scriptsize{\texttt{\name}}};
}
\draw[line width=.35em] (b2.center) -- (b3.center) -- (b4.center) -- (b0.center);
\draw (72:2.5) node[vertex style, fill=white] (d1) {};
\draw (144:2.5) node[vertex style, fill=white] (d2) {};
\draw (216:2.5) node[vertex style, fill=white] (d3) {};
\draw (288:2.5) node[vertex style, fill=white] (d4) {};
\draw (360:2.5) node[vertex style, fill=white] (d1) {};
    \end{scope}
     \begin{scope} [edge style/.style={line width=.8pt}]
       \foreach \i  in {0,...,4}{%
       \pgfmathtruncatemacro{\nextb}{mod(\i+1,5)}
       \draw[edge style] (b\i)--(b\nextb);
       }  
     \end{scope}
  \end{tikzpicture}
~~~~\includegraphics[scale = 0.6]{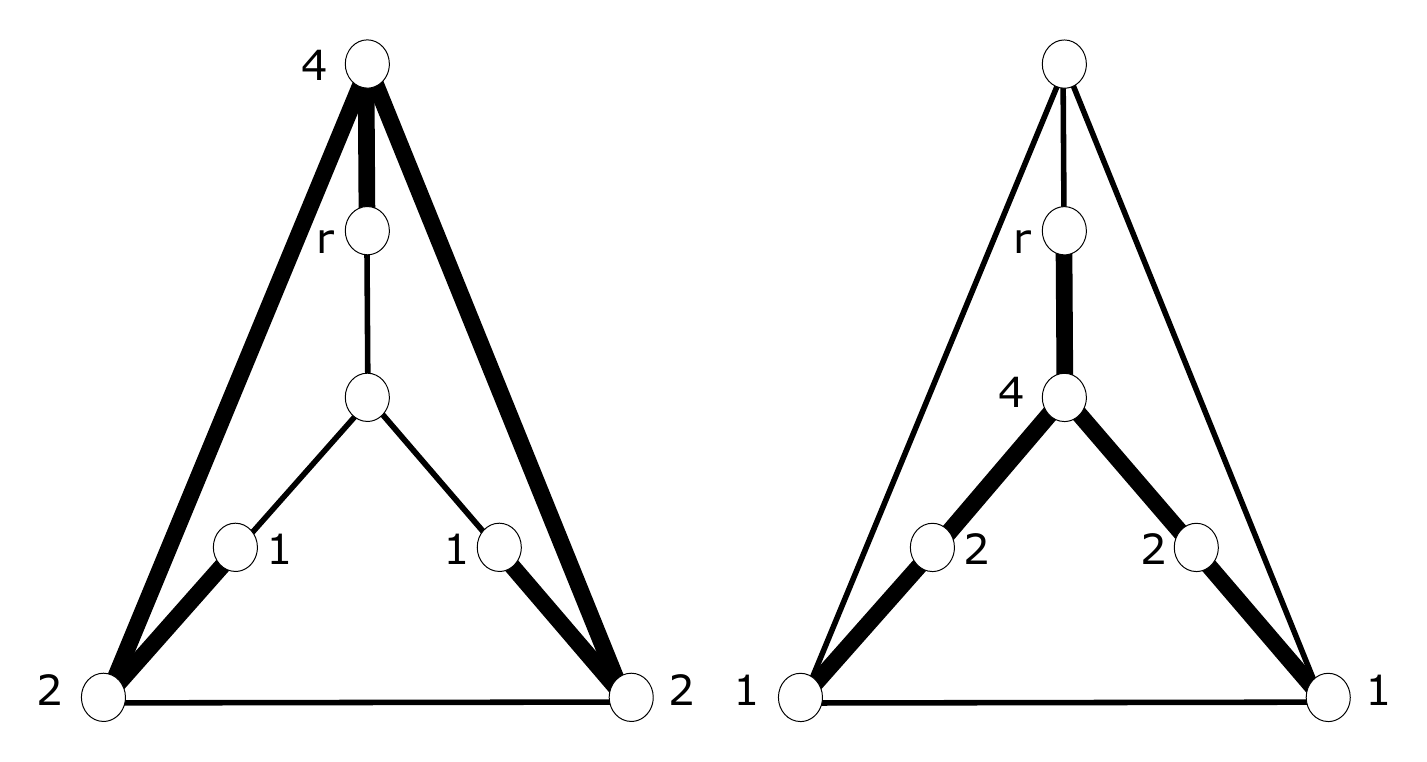}\\
\includegraphics[scale = 0.6]{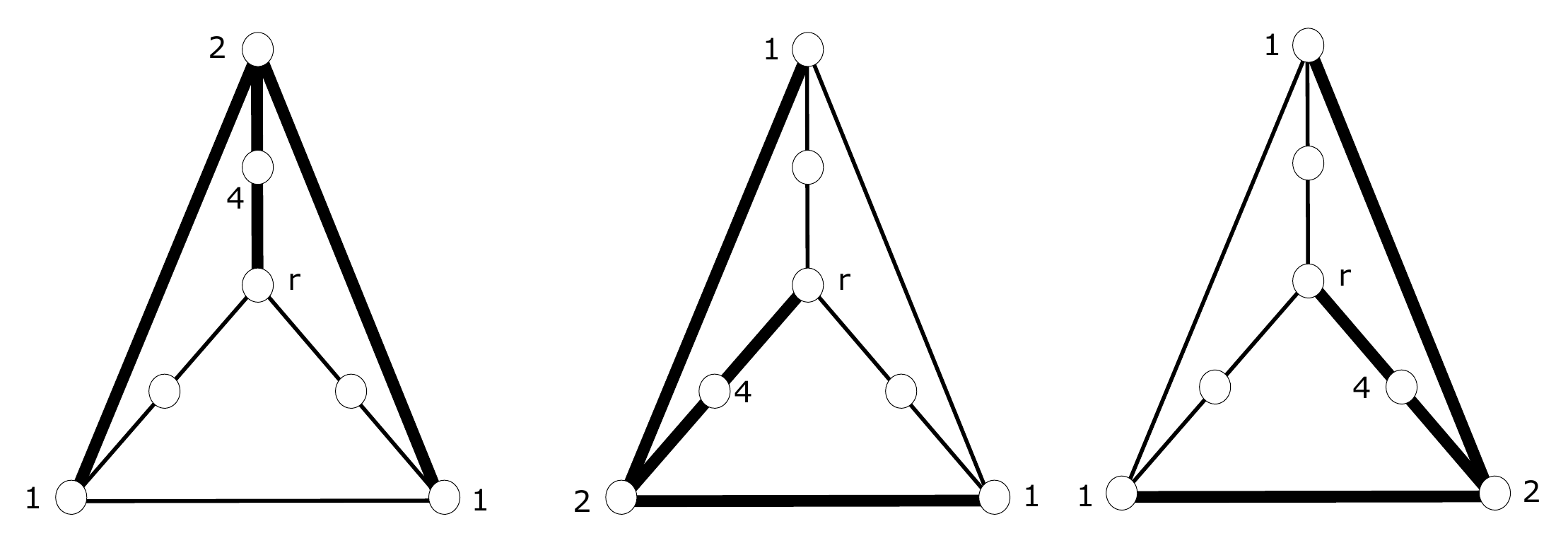}\\
~~~~~~~~\includegraphics[scale = 0.6]{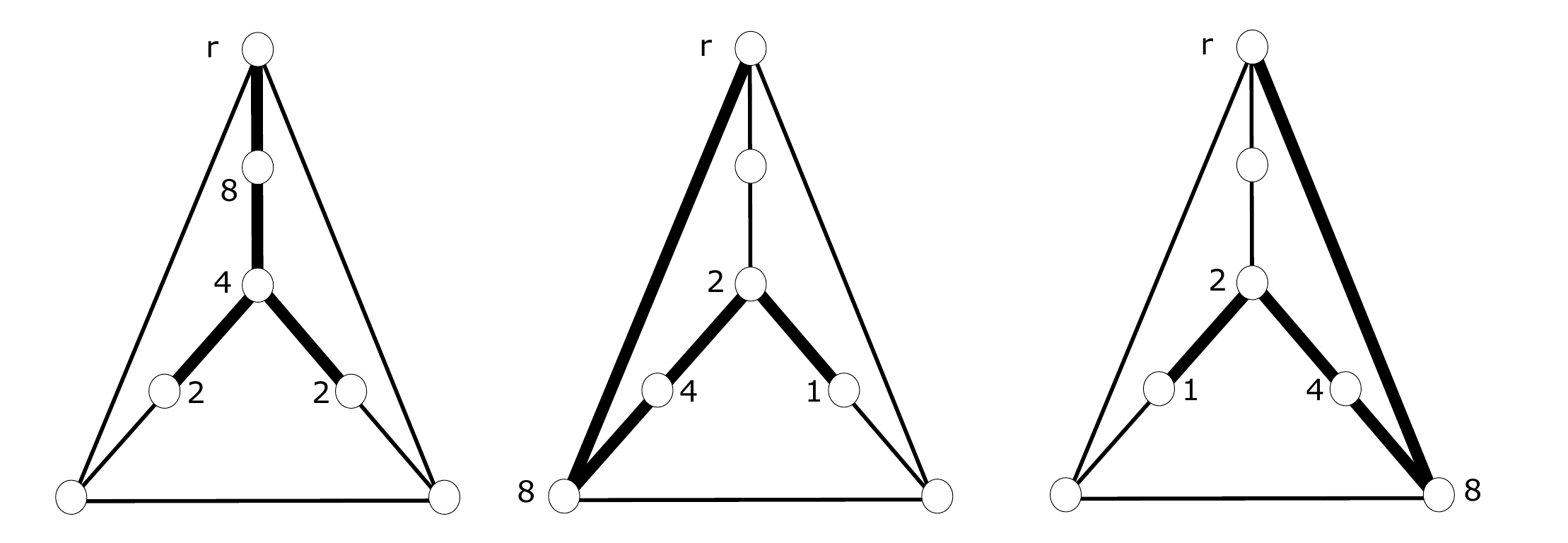}\\
%
\end{center}
\caption{The tree strategies for $C_5$ and for $K_4$ with the edges of a
$K_{1,3}$ subdivided. \label{fig:treeStrat}}
\end{figure}

\begin{example}
The following are two infinite families of Class 0 graphs.  Each $n$-vertex
graph has exactly $2n-5$ edges.
To form an instance of $F_{p,q}$, begin with $K_3$ and replace the two edges
incident to some vertex $v$ with $p$ parallel edges and $q$ parallel edges
(where $p$ and $q$ are positive); finally, subdivide each of these $p+q$ new
edges.
To form an instance of $G_{p,q,r}$, begin with $K_4$ and replace the three edges
incident to some vertex $v$ with $p$ parallel edges, $q$ parallel edges, and
$r$ parallel edges (where $p$, $q$, and $r$ are positive); finally, subdivide
each of these $p+q+r$ new edges.
\end{example}

It is easy to see that each $n$-vertex graph in $F_{p,q}$ has $2n-5$ edges,
since the 2-vertices induce an independent set (when $p\ge 2$ and $q\ge 2$),
and the three high-degree vertices have among them a single edge.
Similarly, $G_{p,q,r}$ has $2n-5$ edges, since the 2-vertices induce an
independent set and the four high-degree vertices have among them 3 edges.  

%
%
We prove that all of $F_{p,q}$ is Class 0, by induction on $p+q$; the induction
step follows immediately from Lemma~\ref{lem:clone}.  The base case is
$F_{1,1}$, which is the 5-cycle.  To show that it is Class 0, we use the tree
strategies shown in the first row of Figure~\ref{fig:treeStrat}.  Since $C_5$ is
vertex-transitive, we can pick the root arbitrarily.  Let $w$ be the sum of the weights in
the two tree strategies for $C_5$.  Note that $w(v)\ge 3$ for every vertex $v\in
V(G)\setminus\{r\}$ and $\sum_{v\in V(G)\setminus \{r\}}w(v) = 14 < 3(4+1)$.
Thus, by the \hyperref[coveringLemma]{Covering Lemma}, $C_5$ is Class 0.

We prove that all of $G_{p,q,r}$ is Class 0, by induction on $p+q+r$; the induction
step follows immediately from Lemma~\ref{lem:clone}.  The base case is
$G_{1,1,1}$.  To show that $G_{1,1,1}$ is Class 0, we use the tree
strategies shown in Figure~\ref{fig:treeStrat}.  Up to symmetry, $G_{1,1,1}$ has
three types of vertices: a degree 2 vertex, the center degree 3 vertex, and a
peripheral degree 3 vertex.  The tree strategies for these cases are given in
the first, second, and third row below the strategies for $C_5$.

Let $r$ be a degree 2 vertex, and let $w(v)$ be the sum of the two weight
functions in the second row of Figure~\ref{fig:treeStrat}.  Note that $w(v)\ge 3$
for all $v\in V(G)\setminus\{r\}$.  Further, $\sum_{v\in V(G)\setminus
\{r\}}w(v)=20<3(6+1)$.  Thus, the \hyperref[coveringLemma]{Covering Lemma}
implies that $\pi(G_{1,1,1},r)\le |V(G_{1,1,1})|$.
Now let $r$ be the center vertex, and let $w(v)$ be the sum of the three weight
functions in the third row of Figure~\ref{fig:treeStrat}.  Note that $w(v)= 4$
for all $v\in V(G)\setminus\{r\}$.  Thus, $\sum_{v\in V(G)\setminus
\{r\}}w(v)=24<4(6+1)$.  Thus, the \hyperref[coveringLemma]{Covering Lemma}
implies that $\pi(G_{1,1,1},r)\le |V(G_{1,1,1})|$.
Finally, let $r$ be a peripheral vertex, and let $w(v)$ be the sum of the three weight
functions in the fourth row of Figure~\ref{fig:treeStrat}.  Note that $w(v)\ge 7$
for all $v\in V(G)\setminus\{r\}$.  Further, $\sum_{v\in V(G)\setminus
\{r\}}w(v)=46<7(6+1)$.  Thus, the \hyperref[coveringLemma]{Covering Lemma}
implies that $\pi(G_{1,1,1},r)\le |V(G_{1,1,1})|$.  Since $\pi(G_{1,1,1},r)\le
|V(G_{1,1,1})|$ for each root $r$, we conclude that
$\pi(G_{1,1,1})\le|V(G_{1,1,1})|$.  So, $G_{1,1,1}$ is Class 0.
\bigskip

Now we show that every diameter 2 Class 0 graph $G$ has $e(G)\ge 2n-5$ and
characterize when equality holds. Clarke et al.~\cite[Theorem 2.4]{clarke} characterized
diameter 2 graphs that are not Class 0.  It seems likely that we could derive
our result from theirs.  However, we prefer the proof below, since it seems
simpler and more straightforward.  Further, the proof below generalizes to
diameter 2 graphs with no cut-vertices.

\begin{thm}
Let $G$ be an $n$-vertex graph with diameter 2.
If $G$ has no cut-vertex (in particular, if $G$ is Class 0) then $e(G) \ge 2n - 5$.
Further, equality holds if and only if $G$ is the Petersen graph or one of the
graphs in Example 1.
\end{thm}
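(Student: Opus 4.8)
The plan is to establish the bound $e(G) \ge 2n-5$ for a diameter 2 graph $G$ with no cut-vertex via a discharging argument on vertex degrees, and then to analyze the equality case carefully. Set $\ch(v) = d(v)$, so that $\sum_v \ch(v) = 2e(G)$, and aim to redistribute charge so that every vertex ends with charge at least $4$, with a controlled number of exceptions: if all but (roughly) $5$ vertices finish with charge $\ge 4$, then $2e(G) \ge 4n - (\text{small constant})$, giving $e(G) \ge 2n-5$. The vertices we need to worry about are the $2$-vertices, since $\delta(G) \ge 2$ by Lemma~\ref{lem:NoCut}. For a $2$-vertex $u$ with neighbors $a,b$, diameter 2 forces every other vertex to be adjacent to $a$, to $b$, or (being at distance $\le 2$ from $u$) in $N(a)\cup N(b)$; more usefully, any vertex nonadjacent to $u$ must be adjacent to both $a$ and $b$ would be too strong, but at least the structure around $u$ is tightly constrained. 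The key structural fact to extract is that two $2$-vertices at distance $\ge 3$ cannot coexist (this is essentially the Small Neighborhood Lemma, or a direct diameter-2 argument: if $u,u'$ are $2$-vertices with disjoint closed neighborhoods, place $3$ pebbles on $u'$ and one elsewhere to see $G$ is not Class~0 — but here we only assume no cut-vertex, so we instead argue directly that diameter 2 forces $N[u] \cap N[u'] \ne \emptyset$, hence the $2$-vertices all lie within distance 2 of each other and their neighbors are highly overlapping).

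The discharging rule I would use is: each $2$-vertex takes charge $1$ from each of its two neighbors. Then a $2$-vertex finishes with charge $4$. A vertex $v$ of degree $d \ge 3$ that is adjacent to $k$ two-vertices finishes with charge $d - k$; this is at least $4$ unless $k \ge d - 3$, i.e., $v$ has few non-$2$-neighbors. The heart of the argument is to bound the total deficiency $\sum_v \max(0, 4 - \ch^*(v))$. Because all $2$-vertices cluster (pairwise within distance 2, with heavily shared neighborhoods), the set $S$ of "high-degree" vertices that send out a lot of charge is small — essentially the common neighbors of the $2$-vertices. I expect the clean statement to be: the $2$-vertices have a common pair (or small set) of neighbors playing the role of the apex vertices of $F_{p,q}$ or $G_{p,q,r}$, and all the deficiency is concentrated on those $\le 3$ or $\le 4$ apex vertices plus perhaps $r$'s neighborhood; summing up the deficiencies there gives the constant $5$. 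I would organize this as: (i) if $G$ has no $2$-vertex then $e(G) \ge 3n/2 \ge 2n-5$ fails for small $n$ but one checks small cases or notes $\delta \ge 3$ and does a separate easy count; actually better, (ii) split on whether the minimum degree is $2$ or $\ge 3$, handle $\delta \ge 3$ by $e(G) \ge 3n/2$ together with a direct check that diameter-2 $\delta\ge 3$ graphs have enough edges, and (iii) for $\delta = 2$ run the discharging above, using the clustering of $2$-vertices to localize the deficiency.

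For the equality characterization, suppose $e(G) = 2n - 5$ exactly. Then the discharging must be essentially tight everywhere: every $2$-vertex has charge exactly $4$ (automatic), every other vertex has charge exactly $4$ except for a total deficiency of exactly $5$ concentrated on a few vertices. Tightness forces: the $2$-vertices form an independent set (any edge between two $2$-vertices would be an extra edge not accounted for / would change the count), each "apex" vertex is adjacent only to $2$-vertices and to the other apex vertices, and the apex vertices induce exactly a triangle (for $F_{p,q}$: two apexes, but then we need a third structural vertex) or $K_4$ (for $G_{p,q,r}$). I would argue that the non-$2$-vertices induce either $K_3$ or $K_4$ together with the single extra case of the Petersen graph, which is $3$-regular ($e = 3n/2 = 15$, $n = 10$, and $2n - 5 = 15$, so it sneaks in exactly when $3n/2 = 2n-5$, i.e., $n = 10$) — this is why the Petersen graph appears as a sporadic equality case in the $\delta \ge 3$ branch. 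So the characterization splits cleanly: in the $\delta = 2$ case, tightness of the discharging pins down the structure as exactly $F_{p,q}$ or $G_{p,q,r}$ (needing a short verification that no other configuration of apex vertices yields a diameter-2 graph — e.g., a single apex gives a cut-vertex, two non-adjacent apexes fail diameter 2, more than four apexes or apexes with outside neighbors leak too much charge); in the $\delta \ge 3$ case, equality $e(G) = 2n-5$ combined with $e(G) \ge 3n/2$ forces $n \le 10$, and then $n = 10$ with $3$-regularity, and among diameter-2 $3$-regular graphs on $10$ vertices one identifies the Petersen graph (here one may cite or re-derive a Moore-graph-type uniqueness, or just check that a $3$-regular diameter-2 graph on $10$ vertices is Petersen).

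The main obstacle I anticipate is the equality analysis, specifically showing that tightness of the discharging leaves \emph{no slack anywhere} and that the only surviving configurations are the two named families plus Petersen — in particular ruling out "mixed" graphs that have some $2$-vertices and some higher-degree non-apex vertices while still achieving $2n-5$, and correctly handling the boundary between the $\delta = 2$ discharging branch and the $\delta \ge 3$ counting branch (the Petersen graph lives only in the latter). A secondary technical point is getting the additive constant exactly right: I would need to track carefully that the total deficiency is \emph{exactly} $5$ (not $4$ or $6$), which means pinning down precisely how much charge the apex vertices and the root's neighborhood lose, and this is where the "no cut-vertex" hypothesis does real work (it forces the $2$-vertices to have at least two distinct neighbors, preventing a single apex and thereby fixing the count). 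Finally, I would double-check that the families $F_{p,q}$ and $G_{p,q,r}$ indeed have diameter $2$ (clear: any two $2$-vertices share an apex neighbor; any $2$-vertex and any apex are adjacent or share an apex neighbor; the apexes are pairwise adjacent) and that Lemma~\ref{lem:clone} legitimately lifts Class~0-ness along the induction, which was already done in the excerpt.
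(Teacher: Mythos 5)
Your high-level skeleton matches the paper's (split on $\delta(G)=2$ versus $\delta(G)\ge 3$, with the Petersen graph arising only in the latter branch and $F_{p,q}$, $G_{p,q,r}$ only in the former), but both branches have real gaps. In the $\delta(G)=3$ branch you only invoke $e(G)\ge \frac{3n}{2}$, which implies $e(G)\ge 2n-5$ only for $n\le 10$; for $n\ge 11$ you defer to ``a direct check that diameter-2 $\delta\ge 3$ graphs have enough edges,'' which is exactly the missing content. The paper's Case 1 supplies it with one clean observation you never state: pick a $3$-vertex $r$; by diameter 2 the set $N(r)$ dominates $G$, so $\sum_{v\in N(r)}d(v)\ge (n-4)+3=n-1$, while the remaining $n-3$ vertices contribute at least $3(n-3)$, whence $2e(G)\ge 4n-10$. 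Without this (or some substitute), neither the inequality nor the equality characterization is established when $\delta(G)=3$ and $n>10$.

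The more serious gap is in the $\delta(G)=2$ branch. Your rule ``each $2$-vertex takes charge $1$ from each neighbor'' does not bound the total deficiency by a constant, because the deficiency need not concentrate on a few apex vertices: it can be spread in small amounts over linearly many vertices. Concretely, take a hub $x$, a cycle $y_1,\dots,y_m$, and $2$-vertices $u_1,\dots,u_m$ with $N(u_i)=\{x,y_i\}$, where $x$ is adjacent to every $y_i$ and every $u_i$. This graph has diameter 2, no cut-vertex, and $e=2n-2$, yet under your rule each $y_i$ finishes with charge $3$, so the total deficiency is at least $m$ and your method only yields $e\ge 2n-\frac{m}{2}$. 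The clustering fact you cite (any two $2$-vertices have intersecting closed neighborhoods) constrains the $2$-vertices' neighborhoods but not the vertices that are each adjacent to just one or two $2$-vertices, and those are where the loss leaks. The paper avoids this entirely: it fixes a single $2$-vertex $r$ with $N(r)=\{v_1,v_2\}$, partitions $V(G)\setminus N(r)$ into $S_1,S_2,S_{1,2}$ by adjacency to $v_1,v_2$, and assigns each edge to an endpoint via orientations of the components of $G[S_1\cup S_2]$ so that every vertex except $v_1$, $v_2$, and at most one special vertex receives at least two edges; the equality analysis then reads off that $G[S_1\cup S_2]$ must be a single edge, a star, or a double star, giving $F_{p,q}$ or $G_{p,q,r}$. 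Your equality analysis, resting on the unproven deficiency bound, inherits the gap.
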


\begin{proof}
If $\delta(G) \ge 4$, then $e(G) \ge \frac{4n}{2} = 2n$ and
the theorem is true. So we assume $\delta(G) \le 3.$
Lemma~\ref{lem:NoCut} implies that $\delta(G) \ge 2$, so $e(G)
\ge \frac{n\delta(G)}{2} \ge n$. If $n\le 5$, then $e(G)\ge n\ge
2n-5$, so the theorem is true. Thus, we assume $n \ge 6$.
We consider two cases: (i) $\delta(G)=3$ and (ii) $\delta(G)=2$.
%

\textbf{Case 1: $\delta(G) = 3.$}
Choose $r \in V(G)$ with $d(r) = 3$, and let $S=N(r)$.
Each vertex $v\in V(G)\setminus S$ has a neighbor in $S$ and $r$ has 3
neighbors in $S$, so $\sum_{v\in S}d(v)\ge (n-4)+3=n-1$.  Also, $\sum_{v\in
V(G)\setminus S}d(v)\ge \sum_{v\in V(G)\setminus S}3= 3(n-3)$. So $e(G) =
\frac12\sum_{v\in V(G)}d(v)\ge \frac12((n-1)+3(n-3))=\frac12(4n-10)=2n-5$.

If equality holds in $e(G)\ge 2n-5$, then each vertex in $V\setminus S$ has
degree 3 and each
vertex in $V\setminus (S\cup \{r\})$ has exactly one neighbor in $S$.  Let
$\{v_1,v_2,v_3\}=S$, let $S_i=N(v_i)-r$ for each $i\in\{1,2,3\}$,
and let $H=G[S_1\cup S_2\cup S_3]$.
Note that $H$ is a disjoint union of cycles, since each vertex has degree
3 and has exactly one neighbor in $S$.  Also $|S_i|\ge 2$ for each $i$, since
$\delta(G)=3$.  Suppose that $|S_1|\ge 3$, and choose $v\in S_3$. Now $|S_1\cup
S_2|\ge 3+2=5$, so $v_3$ is distance at least 3 from some vertex of $S_1\cup
S_2$ (precisely because $H$ is a disjoint union of cycles).  Hence $|S_1|=2$
and, by symmetry, $|S_2|=|S_3|=2$.  Similarly, if $H$ consists of two 3-cycles,
then some pair of its vertices is distance at least 3 apart.  Hence, $H$
is a 6-cycle.  Further, each pair of vertices in the same $S_i$ are distance 3
apart in $H$.  Thus, if $e(G)=2n-5$, then $G$ is the Petersen graph.

\textbf{Case 2: $\delta (G) = 2$. }
Choose $r\in V(G)$ with $d(r) = 2$, and let $\{v_1,v_2\}=N(r)$.
We partition $V(G)\setminus N(r)$ into three sets, $S_1$, $S_2$, and $S_{1,2}$.
 (Note that $r\in S_{1,2}$.) Let $S_1$ consist of all vertices adjacent only to
$v_1$, $S_2$ of all vertices adjacent only to $v_2$, and $S_{1,2}$ of all
vertices adjacent to both $v_1$ and $v_2$.
Let $H$ be the subgraph induced by $S_1\cup S_2$, and let $H_1,\ldots, H_t$ be
the components of $H$.  We first show that $e(G)\ge 2n-4$ if every $H_i$
either contains a cycle or has a vertex adjacent to some vertex of
$S_{1,2}$.

We assign each edge to one of its endpoints as follows, so that
each vertex other than $v_1$ and $v_2$ has at least 2 assigned edges.
Each edge with exactly one endpoint in $\{v_1,v_2\}$ is assigned to its other
endpoint.  Thus, each vertex of $S_{1,2}$ has 2 assigned edges and
each vertex of $S_1\cup S_2$ has 1 assigned edge.
Suppose that $T$ is some tree component of $H$ and $t$ is a vertex of $T$ with a
neighbor in $S_{1,2}$.  We can direct the edges of $T$ so that $t$ has
outdegree 0 and each other
vertex has outdegree 1.  Now we assign to $t$ its edge to $S_{1,2}$ and assign
to each other vertex of $T$ its out-edge.  When $H_i$ is a component of $H$ with a
cycle, the process is similar.  We choose some spanning tree $T$ of $H_i$ and
choose $t$ to be some vertex incident to an edge of $H_i$ not in $T$.

So assume that some component $H_1$ is a tree and has no neighbor in $S_{1,2}$.
If $V(H_1) \subseteq S_1$, then $v_1$ is a cut-vertex, which is forbidden.
Hence, $V(H_1)\not\subseteq S_1$; similarly, $V(H_1)\not\subseteq S_2$.
Suppose that $H$ has another component, with some vertex $w$.  By
symmetry, assume that $w\in S_1$.  Since $H_1$ has vertices in both $S_1$ and
$S_2$, $w$ is distance at least 3 from some vertex of $H_1$ in $S_2$, a
contradiction.  Thus, $H_1$ is the only component of $H$; so from now on, we say
$H$ for $H_1$.
Choose $t$ arbitrarily in $H$, and direct $E(H)$ and assign edges as above.
Now $t$ has 1 assigned edge and each other vertex has 2 assigned edges,
so $e(G)\ge 2n-5$.
If $v_1v_2\in E(G)$, then $e(G)\ge 2n-4$, so we assume $v_1v_2\notin E(G)$.
Similarly, if any edge has both endpoints in $S_{1,2}$, then $e(G)\ge 2n-4$,
so we assume that $S_{1,2}$ induces an independent set.
In what follows, we characterize when equality holds in $e(G)\ge 2n-5$.

First suppose that $H$ has
leaves in both $S_1$ and $S_2$; call these $u_1$ and $u_2$, respectively.  If
$u_1$ and $u_2$ are adjacent, then $H$ is a single edge,
which is possible; this is $F_{1,q}$, where $q=|S_{1,2}|$.
Now suppose that $u_1$ and $u_2$ are nonadjacent.
Since $G$ is diameter 2, $u_1$ and $u_2$ have some common neighbor, $u_3$.
By symmetry, assume that $u_3\in S_1$.  
Now $u_1$ and $v_2$
must have a common neighbor, so $v_1v_2$ is an edge.  However, now $e(G)\ge 2n-4$.

So assume instead that $H$ has leaves only in one of $S_1$ and $S_2$; by
symmetry, say $S_2$. 
Let $S'_1$ denote the vertices of $S_1$ adjacent to a leaf.  Now $S'_1$ induces
a graph with diameter at most 1 (otherwise some leaf is distance at least 3 from
some vertex of $S'_1$).  Since $H$ is acyclic, $|S'_1|\le 2$.

First suppose that $|S'_1|=1$.  Let $\{u_1\}=S'_1$.  Now all vertices in $S_2$
are leaves of $H$, since $H$ is acyclic.  Further, $u_1$ is the only vertex in
$S_1$.  Thus, $H$ is a star centered at $u_1$.  This is possible; 
$G=F_{p,q}$, where $q=|S_{1,2}|$ and $p$ is the number of leaves of $H$.  

Suppose instead that $|S'_1|=2$, and let $\{u_1,u_2\}=S'_1$.  Now $u_1$ and
$u_2$ are adjacent, since $G$ has diameter 2; otherwise some leaf in $S_2$ is
distance at least 3 from $u_1$ or $u_2$.  Again, each vertex $u_3\in S_2$ must be a
leaf, since $G$ is acyclic.  Finally, $S_1=\{u_1,u_2\}$, again since $G$ has
diameter 2.  Thus, $H$ is a double star,
centered at $u_1$ and $u_2$, with all leaves in $S_2$.
This is also possible; $G=G_{p,q,r}$, where $p=|S_{1,2}|$ and $q$ and $r$
are (respectively) the numbers of leaves of $H$ adjacent to $u_1$ and $u_2$.

%
Hence,
$e(G)=2n-5$ implies that $H$ is (i) a single edge, which is $F_{1,q}$, (b) a star with
its center in $S_1$ (by symmetry) and all of its leaves in $S_2$, which is $F_{p,q}$, or
(c) a double star with both of its centers in $S_1$ and all of its leaves in
$S_2$, which is $G_{p,q,r}$.  This finishes the characterization of when $e(G)=2n-5$.
\end{proof}
\bigskip


\end{document}